\def\C{{\mathbb C}}
\def\R{{\mathbb R}}
\def\N{{\mathbb N}}
\def\le{\leqslant}
\def\ge{\geqslant}
\newcommand{\re}{\mathrm{Re}}
\newcommand{\im}{\mathrm{Im}}
\newcommand{\eps}{\varepsilon}
\theoremstyle{plain}
\newtheorem{theorem}{Theorem}[section]
\newtheorem{lemma}[theorem]{Lemma}
\newtheorem{proposition}[theorem]{Proposition}
\theoremstyle{definition}
\newtheorem{definition}[theorem]{Definition}
\newtheorem{remark}[theorem]{Remark}
\newtheorem*{remark*}{Remark}
\numberwithin{equation}{section}
\begin{document}

\title[NLS with partial off-axis variations]{Regularizing nonlinear Schr\"odinger equations through partial off-axis variations}

\author[P. Antonelli]{Paolo Antonelli}

\author[J. Arbunich]{Jack Arbunich}

\author[C. Sparber]{Christof Sparber}

\address[P. Antonelli]
{Gran Sasso Science Institute, viale F. Crispi,7,67100 L'Aquilla, Italy}
\email{paolo.antonelli@gssi.infn.it}

\address[J. Arbunich]
{Department of Mathematics, Statistics, and Computer Science, M/C 249, University of Illinois at Chicago, 851 S. Morgan Street, Chicago, IL 60607, USA}
\email{jarbun2@uic.edu}

\address[C.~Sparber]
{Department of Mathematics, Statistics, and Computer Science, M/C 249, University of Illinois at Chicago, 851 S. Morgan Street, Chicago, IL 60607, USA}
\email{sparber@uic.edu}

\begin{abstract}
We study a class of focusing nonlinear Schr\"odinger-type equations derived recently by Dumas, Lannes and Szeftel  
within the mathematical description of high intensity laser beams \cite{DLS}. 
These equations incorporate the possibility of a (partial) off-axis variation of the group velocity of such laser beams through a second order 
partial differential operator acting in some, but not necessarily all, spatial directions.
We investigate the initial value problem for such models and obtain global well-posedness in $L^2$-supercritical situations, 
even in the case of only partial off-axis dependence. This provides an answer to an open problem posed in \cite{DLS}.
\end{abstract}

\date{\today}

\subjclass[2000]{35Q41, 35C20}
\keywords{Nonlinear Schr\"odinger equation, partial off-axis variation, Strichartz estimates, dispersion, finite-time blow-up, BBM equation}

\thanks{This publication is based on work supported by the NSF through grant no. DMS 1348092. 
The authors thank the anonymous referee for many helpful comments on improving our manuscript.
}
\maketitle


\section{Introduction}\label{sec:intro}

Consider the initial value problem for a general (focusing) {\it nonlinear Schr\"odinger equation} (NLS) in $d\ge 1$ spatial dimensions, i.e.,
\begin{equation}\label{NLScan}
\left\{  
	\begin{array}{lcl}
		i \partial_{t}u + \Delta u + |u|^{2\sigma }u =0, \ t\in\R, \ {\bf x} \in \R^d, \\
		u(0,{\bf x}) = u_{0}({\bf x}),
	\end{array}
\right.
\end{equation}
with $\sigma>0$, some parameter describing nonlinear effects. The NLS is a canonical model for (weakly) nonlinear wave propagation in dispersive media, cf. \cite{SuSu}. In 
particular, the {\it cubic} case ($\sigma =1$) is well-studied in the context of nonlinear laser optics, see \cite{Fi, SuSu}. The NLS
thereby describes diffractive effects which modify the propagation of slowly modulated light rays of geometrical optics over large times. In this context, the 
variable ``$t$"  should not be thought of as time, but rather as the main spatial direction of propagation of the ray. 
Solutions to \eqref{NLScan} admit several conservation laws. In particular, one finds that
\begin{equation}\label{L2}
\| u(t, \cdot) \|_{L^2}^2=\| u_0 \|_{L^2}^2,
\end{equation} 
which corresponds to the conservation of the (total) power, or intensity of the wave train.

From a mathematical point of view, it is well-known that \eqref{NLScan} is $L^2$-subcritical provided $\sigma < \frac{2}{d}$. In this regime, one can use the 
dispersive properties of the NLS to obtain global solutions $u\in C(\R_t; L^2(\R^d))$, satisfying \eqref{NLScan} in the sense of 
Duhamel's integral representation, see e.g. \cite{Cz}.
For $\frac{2}{d} \le \sigma < \frac{2}{(d-2)_+}$ one usually seeks solutions $u(t, \cdot) \in H^1(\R^d)$, in particular this includes the cubic case in dimensions $d= 2$ and $3$. 
However such a solution may not exist for all times $t\in \R$, due to the possibility of {\it finite-time blow-up}. In this case
\[
\lim_{t\to T_-} \| \nabla u(t,\cdot) \|_{L^2} = + \infty
\]
for some $T<\infty$, depending on the initial data.
A rather complete description of this phenomenon is available in the $L^2$-critical case $\sigma =\frac{2}{d}$. In particular, it is known that global solutions exist 
for intensities $\| u_0 \|_{L^2}< \| Q \|_{L^2}$, where $Q$ denotes the (stationary) ground state solution associated to \eqref{NLScan}. Above this threshold 
finite time blow-up appears and has been analyzed in a series of works, see \cite{MeRa1, MeRa2, MeRa3} and the references therein. 

From the point of view of laser physics, blow-up is usually referred to as {\it optical collapse}. However, it is known from physics 
experiments that higher order effects, neglected in the derivation of \eqref{NLScan}, can arrest such a collapse and instead yield a process called 
{\it filamentation}. The latter corresponds to a complicated interplay between diffraction, self-focusing, and defocusing mechanisms present at high intensities 
which allow the beam to propagate beyond the theoretical predicted blow-up point, see \cite{Fi}. 

In their recent mathematical study \cite{DLS}, Dumas, Lannes and Szeftel derive several new variants of the NLS from the underlying Maxwell equations of electromagnetism, 
in an effort to incorporate additional physical effects not present in \eqref{NLScan}. One of the new NLS type models derived in \cite{DLS} allows for the possibility of an 
{\it off-axis variation of the group velocity}. It takes into account the fact that self-focusing pulses usually become 
asymmetric due to variations of the group velocity within off-axis rays, a phenomenon referred to as {\it space-time focusing} in the optics literature, cf. \cite{Ro}.
To this end, the simplest mathematical model is given by 
\begin{equation}\label{NLSGVD}
		i P_{\eps} \partial_{t}u + \Delta u + |u|^{2}u =0,
\end{equation}
where $P_\eps\equiv P_\eps(\nabla)$ is a linear, second order, self-adjoint operator such that 
\[
\langle P_\eps u, u \rangle_{L^2} \gtrsim \| u \|_{L^2}^2 + \eps^2 \sum_{j=1}^k \| \omega_j \cdot \nabla u \|_{L^2}^2.
\]
Here, $\langle \cdot, \cdot \rangle_{L^2}$ denotes the usual $L^2(\R^d)$ inner product, $0< \eps\le 1$ is a 
small (dimensionless) parameter, and $\{\omega_j\}_{j=1}^k \in \R^d$, with $k\le d$, are some given (linearly independent) vectors representing the off-axis directions. 
The case $k=d$ thereby corresponds to a {\it full off-axis dependence} of the group velocity, 
whereas $k<d$ is referred to as {\it partial off-axis dependence}. In the former case, the authors of \cite{DLS} have shown that 
solutions $u(t, \cdot) \in H^1(\R^d)$ to \eqref{NLSGVD} exist for all $t\in \R$, and hence no 
finite-time blow-up occurs. The situation involving only a partial off-axis dependence, however, is much more involved and it is an open 
problem posed in \cite{DLS} to prove global well-posedness in this case. 

In this work, we shall do so and thus provide an answer to the problem posed in \cite{DLS}. To this end, we consider the following Cauchy problem:
\begin{equation}\label{NLS}
\left\{  
	\begin{array}{lcl}
		i P_{\eps} \partial_{t}u + \Delta u + |u|^{2\sigma}u =0, \ t\in\R, \ {\bf x} \in \R^d,\\
		u(0,{\bf x}) = u_{0}({\bf x}),
	\end{array}
\right.
\end{equation}
where $\sigma>0$. From now on, we shall split the spatial coordinates into ${\bf x}=(x,y) \in \R^{d-k} \times \R^{k}$ for $k \le d$, with the 
understanding that if $k=d$, we again identify $y\equiv {\bf x}\in \R^d$. 
In addition, we choose without loss of generality $ \omega_j$ to be the 
$j$th standard basis vectors in $\R^k$. Explicitly, we then have
\begin{equation}\label{P}
P_{\eps} = 1 - \eps^{2}\Delta_{y} = 1-\eps^{2}\sum_{j=1}^{k} \frac{\partial^{2}}{\partial y_j^2},\quad 0\le k\le d.
\end{equation}
With the usual summation convention, the case $k=0$ thereby corresponds to the situation with no off-axis variation, for which we will recover (as we shall see below) 
the usual $L^2$ well-posedness theory for NLS.

Mathematically, \eqref{NLS} is related to \eqref{NLScan}, in the same way the {\it Benjamin--Bona--Mahoney equation} 
is related to the celebrated {\it Korteweg--de Vries equation} for shallow, unidirectional water waves in $d=1$, see \cite{BBM}. 
The difference, when compared to our case, is that we are not confined to work in only one spatial dimension, and 
therefore can allow for a partial regularization in $k<d$ directions (a possibility which seems to have not been considered 
for BBM-type equations in higher dimensions, see \cite{GoWi}).

When comparing \eqref{NLS} to \eqref{NLScan}, one checks that, at least formally, both equations are Hamiltonian systems which (formally) conserve the 
same energy functional, i.e.,
\begin{equation}\label{CLE}
E(t) = \frac{1}{2}\|\nabla u(t, \cdot) \|^{2}_{L^2} -\frac{1}{2(\sigma +1)}\| u(t, \cdot) \|^{2\sigma +2}_{L^{2\sigma +2}} = E(0).
\end{equation}
However, instead of the usual $L^2$ conservation law \eqref{L2}, one finds
\begin{equation} \label{CLM}
\| P^{1/2}_{\eps}u(t, \cdot) \|^2_{L^2} =\| P^{1/2}_{\eps}u_{0}\|^2_{L^2} 
\end{equation}
in the case of \eqref{NLS}. Here, and in the following, $P^{1/2}_{\eps}$ is the pseudo-differential operator corresponding to the Fourier symbol 
\begin{equation}\label{sym}
\widehat P^{1/2}_\eps(\eta)= (1+ \eps^2 |\eta|^2)^{1/2} \quad \text{for $\eta \in \R^k$.}
\end{equation}
The identity \eqref{CLM} corresponds to a conservation law for (the square of) the mixed $L^{2}(\R_{x}^{d-k}; H^{1}(\R_{y}^k))$-norm of $u$, whenever $\eps >0$. 
In order to understand the influence of partial off-axis variations, it is therefore natural to set up a well-posedness theory in this mixed Sobolev-type space. \\

With this in mind, we can now state the main results of this work.

\begin{theorem}[Partial off-axis variation; subcritical case]\label{thm:sub}
Let $d> k\ge 0$ and
\begin{itemize}
\item either $k\le 2$ and $0 \le \sigma < \frac{2}{d-k}$, 
\item or $k>2$ and $0 \le \sigma \le \frac{2}{d-2}$.
\end{itemize}
Then for any $u_0\in L^{2}(\R_{x}^{d-k}; H^{1}(\R_{y}^k))$ there exists a unique global-in-time solution 
$u\in C(\R_t;L^{2}(\R_x^{d-k}; H^{1}(\R_{y}^k)))$ to \eqref{NLS}, depending 
continuously on the initial data and satisfying the conservation law \eqref{CLM} for all $t\in \R$. 
\end{theorem}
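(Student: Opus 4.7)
The plan is to run a local-then-global well-posedness scheme adapted to the anisotropic structure of \eqref{NLS} via mixed-norm Strichartz estimates in the reduced variable $x \in \R^{d-k}$. A key point is that the operator $P_\eps^{-1}$ acts as a regularizing operator in the $y$-direction (gaining two derivatives), which combined with Sobolev embedding in $y$ and standard dispersion in $x$ produces enough estimates to close a fixed point even at the Sobolev endpoint $\sigma = \frac{2}{d-2}$.

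First I would reformulate \eqref{NLS} via Duhamel. Since $P_\eps$ is invertible on every $L^2_x H^s_y$ (its Fourier symbol $1+\eps^2|\eta|^2$ is bounded below by $1$), we have
\[
u(t) = U(t) u_0 + i \int_0^t U(t-s)\, P_\eps^{-1}\bigl(|u|^{2\sigma}u\bigr)(s)\, ds,
\]
with $U(t) := \exp(it P_\eps^{-1}\Delta)$, a unitary group on every $L^2_x H^s_y$ (its Fourier symbol is unimodular). To derive Strichartz estimates, I take a partial Fourier transform in $y$: for each fixed $\eta \in \R^k$ the propagator reduces, up to a pure phase, to a free Schr\"odinger evolution in $x \in \R^{d-k}$ with rescaled dispersion $(1+\eps^2|\eta|^2)^{-1}$. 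Applying the classical Strichartz estimates in $\R^{d-k}$ pointwise in $\eta$, then combining Plancherel with Minkowski's integral inequality in $\eta$, produces
\[
\|U(\cdot) u_0\|_{L^q_t L^r_x L^2_y} \le C_\eps \|u_0\|_{L^2_x H^1_y}
\]
for every Schr\"odinger-admissible pair $(q,r)$ in $\R^{d-k}$, along with the inhomogeneous analogue. These extend to $L^q_t L^r_x H^1_y$ by commuting $(1-\Delta_y)^{1/2}$ through $U(t)$.

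Next I set up a Banach contraction in
\[
X_T := C([0,T]; L^2_x H^1_y) \cap L^q_t L^r_x H^1_y,
\]
with $(q,r)$ chosen as a function of $\sigma$. I control the Duhamel nonlinearity $P_\eps^{-1}(|u|^{2\sigma}u)$ by combining (i) the smoothing of $P_\eps^{-1}$, which gains two $y$-derivatives; (ii) Sobolev embedding in the $y$-variable, namely $H^1_y(\R^k) \hookrightarrow L^p_y$ with $p=\infty$ if $k=1$, any finite $p$ if $k=2$, and $p=\frac{2k}{k-2}$ if $k \ge 3$; and (iii) H\"older's inequality in $(t,x)$. The two conditions in the theorem are exactly what is needed for these to close: for $k \le 2$, $\sigma < \frac{2}{d-k}$ is $L^2$-subcriticality in $\R^{d-k}$; for $k > 2$, the cap $\sigma \le \frac{2}{d-2}$ still yields $L^2$-subcriticality in $\R^{d-k}$ (since $\frac{2}{d-2} < \frac{2}{d-k}$ when $k > 2$) while the combined Sobolev--$P_\eps^{-1}$ gain in $y$ accommodates $|u|^{2\sigma+1}$. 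A standard fixed point then yields a contraction on $X_T$ with $T$ depending only on $\|u_0\|_{L^2_x H^1_y}$, giving local existence, uniqueness, and continuous dependence.

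Finally, global existence follows from the conservation law \eqref{CLM}, which bounds $\|u(t)\|_{L^2_x H^1_y}$ uniformly in $t$ by $\|u_0\|_{L^2_x H^1_y}$ (up to $\eps$-dependent constants). Since the local existence time depends only on this norm, iteration extends the solution to all of $\R$. I expect the main technical difficulty to arise in the nonlinear estimate for $k > 2$ at the endpoint $\sigma = \frac{2}{d-2}$: there the Sobolev embedding in $y$ alone is typically insufficient, and one must exploit the two-derivative gain of $P_\eps^{-1}$ (e.g.\ via $L^p_y \hookrightarrow H^{-2}_y$ for an appropriate small exponent $p$) in combination with tightly balanced Strichartz exponents so that H\"older in $(t,x)$ produces a small factor $T^\alpha$ to drive the contraction. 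The cases $k \le 2$ are comparatively straightforward because $H^1_y \hookrightarrow L^\infty_y$ (or any $L^p_y$) effectively reduces the analysis to a standard $L^2$-subcritical NLS in $\R^{d-k}$ with $y$ entering only as a parameter.
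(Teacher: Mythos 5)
Your overall architecture matches the paper's: Duhamel for the equation conjugated by $P_\eps^{-1}$, Strichartz estimates that disperse only in $x\in\R^{d-k}$, Sobolev embedding in $y$ combined with the smoothing of $P_\eps^{-1}$ to close the nonlinear estimate, the same case split between $k\le 2$ and $k>2$, and globalization by iterating on \eqref{CLM}. Your fiberwise derivation of the Strichartz bounds (freeze $\eta$, rescale time by $a=1+\eps^2|\eta|^2$, recombine with Plancherel and Minkowski) is a legitimate alternative to the paper's $TT^*$ argument, and it correctly yields
\begin{equation*}
\|U(\cdot)u_0\|_{L^q_tL^r_xL^2_y}\lesssim \big\|(1+\eps^2|\eta|^2)^{1/q}\,\tilde u_0\big\|_{L^2}\le \|u_0\|_{L^2_xH^{\delta(r)}_y},\qquad \delta(r)=\tfrac2q ,
\end{equation*}
which is exactly the loss recorded in \eqref{S1}.

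The genuine gap is in the sentence ``these extend to $L^q_tL^r_xH^1_y$ by commuting $(1-\Delta_y)^{1/2}$ through $U(t)$.'' The factor $a^{1/q}$ produced by the time rescaling is an intrinsic loss of $\delta(r)$ derivatives in $y$; it is sharp (test on data concentrated at $y$-frequency $N$, for which the $x$-dispersion is slowed by $a\sim\eps^2N^2$) and it commutes with $(1-\Delta_y)^{1/2}$. Commuting therefore gives $\|U(\cdot)u_0\|_{L^q_tL^r_xH^1_y}\lesssim\|u_0\|_{L^2_xH^{1+\delta(r)}_y}$, not $\lesssim\|u_0\|_{L^2_xH^1_y}$. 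Consequently, for $u_0$ merely in $L^2_xH^1_y$ the free evolution does not belong to your space $X_T=C([0,T];L^2_xH^1_y)\cap L^q_tL^r_xH^1_y$ (unless $r=2$, which carries no gain), and the contraction cannot start; the inhomogeneous estimate has the same defect. The repair is precisely the paper's device: accept the loss and take the Strichartz component of the norm to be $L^q_tL^r_xH^{1-\delta(r)}_y$ --- equivalently, substitute $v=P_\eps^{1/2}u$ and contract in $L^\infty_tL^2\cap L^\gamma_tL^\rho_xH^{-\delta(\rho)}_y$ as in Proposition \ref{prop:GWPL2} --- and then recover the missing $y$-regularity in the nonlinear estimate, where the dual Sobolev embedding $L^{\rho'}_y\subset H^{-s}_y$ with $s=\frac{k\sigma}{2(\sigma+1)}$ together with the one remaining derivative gained from $P_\eps^{-1/2}$ closes precisely under $s+\delta(\rho)\le1$, i.e.\ $\sigma\le\frac{2}{(d-2)_+}$ (Lemma \ref{lem:Nlest}). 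With that correction your remaining steps (H\"older in $(t,x)$ producing $T^{1-\frac{(d-k)\sigma}{2}}$, and iteration on the conserved quantity) go through, but the $y$-regularity bookkeeping in the fixed-point space must be redone around the loss rather than assumed away.
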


In the result above, we have to exclude the choice $k\le 2$ and $\sigma=\frac{2}{d-k}$, which corresponds to a critical case that needs to be dealt with separately (see below). 
Regardless of that, we see that as soon as $k>0$, i.e., as soon as some partial off-axis variation is present, 
we can allow for $L^2$-{\it supercritical} powers $\sigma> \frac{2}{d}$ and still retain global-in-time solutions $u$. In other words, no finite time blow-up appears  
in the case of partial off-axis variations, and we can even allow for initial data $u_0$ in a space slightly larger than $H^1(\R^d)$.  

We now turn to the case of partial off-axis dispersion with critical nonlinearity, for which we can prove an analogue of the well-posedness results given in \cite{CW}. 
Note that for $k=0$ (no off-axis variation) we recover the usual $L^2$-critical case $\sigma =\frac{2}{d}$.

\begin{theorem}[Partial off-axis variation; critical case] \label{thm:crit}
Let $0\le k\le 2$, and $\sigma=\frac{2}{d-k}$. Then for any $u_0\in L^{2}(\R_{x}^{d-k}; H^{1}(\R_{y}^k))$ 
there exist times $0<T_{\rm max}, T_{\rm min}\le\infty$ and a unique maximal solution 
$u\in C((-T_{\rm min}, T_{\rm max});L^{2}(\R_{x}^{d-k}; H^{1}(\R_{y}^k)))$, satisfying \eqref{CLM} for all $t\in (-T_{\rm min}, T_{\rm max})$.
In addition, we have the following blow-up alternative: $T_{\rm max}<\infty$ if and only if
\begin{equation*}
\|u\|_{L^{\frac{2(d-k+2)}{d-k}}\big([0, T_{\rm max}) \times \R_x^{d-k} ; H^{\frac{2}{d-k+2}}(\R_y^k)\big)}=\infty,
\end{equation*}
and analogously for $T_{\rm min}$. Finally, if the $L^{2}(\R_{x}^{d-k}; H^{1}(\R_{y}^k))$-norm of the initial datum is sufficiently small, 
then the solution $u$ exists for all $t\in \R$.
\end{theorem}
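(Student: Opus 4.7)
My plan is to follow the Cazenave--Weissler scheme \cite{CW} for the $L^2$-critical NLS, adapted to the partial off-axis setting in the mixed Sobolev space $L^2_x(\R^{d-k};H^1_y(\R^k))$. First, I would apply $P_\eps^{-1}$ to \eqref{NLS} and set $S_\eps(t):=e^{itP_\eps^{-1}\Delta}$, so that the problem is equivalent to the Duhamel equation
\begin{equation*}
u(t)=S_\eps(t)u_0+i\int_0^t S_\eps(t-s)P_\eps^{-1}\bigl(|u|^{2\sigma}u\bigr)(s)\,ds.
\end{equation*}
In Fourier variables $(\xi,\eta)\in\R^{d-k}\times\R^k$ the symbol of $S_\eps(t)$ equals $\exp(-it(|\xi|^2+|\eta|^2)/(1+\eps^2|\eta|^2))$; for each frozen $\eta$ this is a free Schr\"odinger evolution in $x\in\R^{d-k}$ with effective mass $(1+\eps^2|\eta|^2)^{-1}\in(0,1]$. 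Hence, by Plancherel in $y$ together with the standard Schr\"odinger dispersive/Strichartz estimates in $\R^{d-k}$, one obtains mixed-norm Strichartz inequalities
\begin{equation*}
\|S_\eps(\cdot)f\|_{L^p_t L^q_x H^s_y}\lesssim \|f\|_{L^2_x H^s_y},
\end{equation*}
together with the analogous retarded Duhamel bound, for every Schr\"odinger-admissible pair $(p,q)$ in dimension $n:=d-k$ and every $s\in\R$. The operator $P_\eps^{-1}$ is harmless here since its symbol is bounded by one.

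Next I would set up the fixed point in the space
\begin{equation*}
X_T:=C([0,T];L^2_xH^1_y)\cap L^{\frac{2(n+2)}{n}}_{t,x}\bigl([0,T];H^{\frac{2}{n+2}}_y\bigr),
\end{equation*}
whose lower-regularity Strichartz component is exactly the norm appearing in the blow-up alternative. The decisive estimate is the nonlinear bound
\begin{equation*}
\bigl\||u|^{2\sigma}u\bigr\|_{L^{p'}_{t,x}H^{s}_y}\lesssim \|u\|_{X_T}^{2\sigma+1},\qquad p=\tfrac{2(n+2)}{n},\ s=\tfrac{2}{n+2},
\end{equation*}
whose $L^{p'}_{t,x}$ component is the standard Cazenave--Weissler computation using $(2\sigma+1)p'=p$ together with $\sigma=2/n$. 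For the $H^s_y$ component one applies a fractional Leibniz/chain rule of Kato--Ponce type for $\langle D_y\rangle^{s}$, combined with the Sobolev embedding $H^1(\R^k)\hookrightarrow L^r(\R^k)$ for every $r<\infty$; this is precisely where the hypothesis $k\le 2$ enters, and the specific value $s=2/(n+2)$ is the unique one for which the resulting H\"older exponents close simultaneously in $x$ and in $y$.

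With this nonlinear estimate the Duhamel map is a contraction on a ball of $X_T$ whenever either $T$ is small or $\|S_\eps(\cdot)u_0\|_{L^{2(n+2)/n}_{t,x}H^{2/(n+2)}_y}$ is small. This yields existence of a unique maximal solution together with continuous dependence on the data, and \eqref{CLM} is then obtained by regularizing $u_0$, solving smoothly, and passing to the limit. The blow-up alternative follows in a standard way: if $T_{\rm max}<\infty$ while the stated Strichartz norm remained finite on $[0,T_{\rm max})$, one could restart from $u(t_0)$ for $t_0$ close to $T_{\rm max}$ on a time interval of uniform length and extend past $T_{\rm max}$, a contradiction. For globally small data, the linear Strichartz estimate on $\R\times\R^d$ makes the relevant norm of $S_\eps(\cdot)u_0$ as small as needed, so the contraction runs directly with $T=\infty$.

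The main technical obstacle I anticipate is the mixed-norm fractional chain rule for $\langle D_y\rangle^s(|u|^{2\sigma}u)$ in $L^{q'}_xL^2_y$: the anisotropy between the dispersive $x$-direction and the Sobolev-embedded $y$-direction forces a careful distribution of derivatives via a vector-valued Kato--Ponce estimate, and it is exactly this step that both pins down $s=2/(n+2)$ and forces the restriction $k\le 2$ (beyond which $H^1(\R^k)$ no longer embeds into every $L^r$). Once this bound is proved, the rest of the argument proceeds in close parallel to the classical $L^2$-critical theory, with the $L^2_y$-conservation replaced by the $H^1_y$-conservation \eqref{CLM}.
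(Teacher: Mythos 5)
Your overall architecture (the Cazenave--Weissler critical scheme, mixed $L^{p}_{t,x}H^{s}_{y}$ Strichartz spaces, and the correct exponents $p=\tfrac{2(n+2)}{n}$, $s=\tfrac{2}{n+2}$ for $u$, with $n=d-k$) matches the paper's, but the proposal rests on a Strichartz estimate that is false as stated. Freezing $\eta$ does reduce $S_\eps(t)$ to a free Schr\"odinger flow in $x\in\R^{n}$, but with time rescaled by $a=1+\eps^{2}|\eta|^{2}$: the dispersive kernel decays like $(a/|4\pi t|)^{n/2}$ and the Strichartz constant for $e^{it\Delta_{x}/a}$ grows like $a^{1/p}=a^{\delta(q)/2}$. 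The constant is therefore \emph{not} uniform in $\eta$, and after Plancherel in $y$ one does not obtain $\|S_\eps(\cdot)f\|_{L^{p}_{t}L^{q}_{x}H^{s}_{y}}\lesssim\|f\|_{L^{2}_{x}H^{s}_{y}}$; one obtains an estimate with a loss of $\delta(q)$ derivatives in $y$ (this is precisely Propositions \ref{prop:DisE} and \ref{prop:STR}: the homogeneous estimate \eqref{S1} lands in $H^{-\delta(r)}_{y}$, and the inhomogeneous estimate \eqref{S2} loses $\delta(\rho)+\delta(r)$ derivatives between input and output). Testing with data oscillating at $y$-frequency $N$ shows the loss is genuine. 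Consequently, dismissing $P_\eps^{-1}$ as ``harmless since its symbol is bounded by one'' discards exactly the two derivatives of $y$-smoothing that must be spent to pay for this loss; without them your contraction does not close. This is the entire point of the paper's change of unknown $v=P_\eps^{1/2}u$: it turns \eqref{CLM} into an $L^{2}$ conservation law and distributes a $P_\eps^{-1/2}$ both outside the nonlinearity (to absorb the inhomogeneous Strichartz loss) and onto each factor inside it (to run the Sobolev embeddings in $y$).

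Two further points. First, the paper never needs the mixed-norm fractional Kato--Ponce chain rule for $\langle D_{y}\rangle^{s}\bigl(|u|^{2\sigma}u\bigr)$, which you correctly flag as the hard step of your route and which is delicate for non-integer $2\sigma$: by working at \emph{negative} $y$-regularity $H^{-\delta(\rho)}_{y}$ for $v$, Lemma \ref{lem:Nlest} only uses the dual embedding $L^{\rho'}(\R^{k})\subset H^{-s}(\R^{k})$ and the pointwise bound $|g(z)-g(w)|\le C_\sigma(|z|^{2\sigma}+|w|^{2\sigma})|z-w|$. Second, the restriction $k\le2$ does not come from the failure of $H^{1}(\R^{k})\hookrightarrow L^{r}(\R^{k})$; it comes from the derivative budget $s+\delta(\rho)=\frac{d\sigma}{2(\sigma+1)}\le1$ (the single derivative gained from the outer $P_\eps^{-1/2}$ must cover both the embedding cost $s=\frac{k\sigma}{2(\sigma+1)}$ and the Strichartz loss $\delta(\rho)$), which at the critical power $\sigma=\frac{2}{d-k}$ forces $k\le2$. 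If you replace the loss-free Strichartz claim by the lossy one and route the $P_\eps$-smoothing accordingly, your argument essentially becomes the paper's proof of Proposition \ref{prop:LWPcrit}.
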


For completeness, we shall also state a result in the case of full off-axis variation. Note when $k=d$, the mixed Sobolev space above simply becomes $H^1(\R^d)$. 

\begin{theorem}[Full off-axis variation] \label{thm:full}
Let $k=d$ and $0\le \sigma \le \frac{2}{(d-2)_+}$. Then for any $u_0\in H^{1}(\R^d)$ there exists a unique global-in-time solution 
$u\in C( \R_t; H^{1}(\R^d) )$ to \eqref{NLS}, depending 
continuously on the initial data and satisfying the conservation laws \eqref{CLE} and \eqref{CLM} for all $t\in \R$.
\end{theorem}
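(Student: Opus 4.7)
Since $k=d$, the operator $P_\eps = 1 - \eps^2 \Delta$ on $\R^d$ has Fourier symbol $1 + \eps^2 |\eta|^2 \ge 1$, is self-adjoint and boundedly invertible, with $P_\eps^{-1}$ a Fourier multiplier of order $-2$ mapping $H^s(\R^d)$ to $H^{s+2}(\R^d)$ for every $s\in\R$. The plan is to apply $P_\eps^{-1}$ to the equation \eqref{NLS}, converting it into the abstract Banach-space ODE
\begin{equation*}
\partial_t u = F_\eps(u), \qquad F_\eps(u) := -i\, P_\eps^{-1}\bigl(\Delta u + |u|^{2\sigma}u\bigr),
\end{equation*}
then solve this ODE in $H^1(\R^d)$ via Cauchy-Lipschitz (Picard-Lindelöf), and finally globalize using the conservation laws.

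The linear part $-iP_\eps^{-1}\Delta$ has bounded Fourier symbol $i|\eta|^2 / (1+\eps^2|\eta|^2)$, hence defines a bounded operator on $H^1(\R^d)$ (with norm of order $\eps^{-2}$). For the nonlinear part, the Sobolev embedding $H^1(\R^d) \hookrightarrow L^{2\sigma+2}(\R^d)$ is valid precisely for $\sigma \le \frac{2}{(d-2)_+}$, so $|u|^{2\sigma}u$ lies in the dual space $L^{(2\sigma+2)'}(\R^d) \hookrightarrow H^{-1}(\R^d)$, and $P_\eps^{-1}$ carries it into $H^1(\R^d)$. The pointwise inequality $\bigl||u|^{2\sigma}u - |v|^{2\sigma}v\bigr| \lesssim (|u|^{2\sigma} + |v|^{2\sigma})|u-v|$ together with Hölder and the same Sobolev embedding then shows that $F_\eps$ is Lipschitz continuous on bounded subsets of $H^1(\R^d)$.

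The Picard-Lindelöf theorem now furnishes, for every $u_0 \in H^1(\R^d)$, a unique maximal solution $u \in C^1(I; H^1(\R^d))$ with $0\in I$, depending continuously on the datum. Since $\partial_t u \in C(I; H^1(\R^d))$, the conservation quantities may be differentiated rigorously. Pairing the equation with $u$ in the $H^{-1}$--$H^1$ duality and taking real parts eliminates the $\Delta$-contribution (self-adjointness) and the nonlinear term (reality of $\|u\|_{L^{2\sigma+2}}^{2\sigma+2}$), giving $\frac{d}{dt}\langle P_\eps u, u\rangle_{L^2} = 0$, which is \eqref{CLM}. Pairing with $\partial_t u$ instead kills the $iP_\eps$-term by self-adjointness of $P_\eps$, and a chain-rule argument for the $L^{2\sigma+2}$-functional (justified by the Sobolev embedding above) produces \eqref{CLE}.

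Global existence is then immediate: \eqref{CLM} amounts to $\|u(t)\|_{L^2}^2 + \eps^2 \|\nabla u(t)\|_{L^2}^2 = \|u_0\|_{L^2}^2 + \eps^2\|\nabla u_0\|_{L^2}^2$, which for fixed $\eps>0$ is equivalent to the $H^1$-norm squared. Hence $u(t)$ stays in a fixed ball of $H^1(\R^d)$ throughout its maximal interval $I$, which forces $I = \R$. The only step that requires any care is the Lipschitz estimate at the Sobolev-critical power $\sigma = \frac{2}{d-2}$ in dimensions $d\ge 3$, where the embedding $H^1 \hookrightarrow L^{2^{*}}$ is borderline; a direct computation shows nevertheless that the Hölder exponents close exactly, and the two-derivative smoothing provided by $P_\eps^{-1}$ more than compensates for the lack of compactness at criticality, so that no smallness assumption on $u_0$ is needed.
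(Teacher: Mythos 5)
Your proof is correct. The paper proves this theorem differently in form, though the underlying mechanism is the same: it substitutes $v=P_\eps^{1/2}u$, writes \eqref{PNLS} in Duhamel form with the unitary group $S_\eps(t)=e^{itP_\eps^{-1}\Delta}$ on $L^2(\R^d)$, and runs a contraction in $C([0,T];L^2)$ (Lemma \ref{lem:deqk}), where the nonlinear estimate uses the one-derivative gain of $P_\eps^{-1/2}$ on each side together with the embeddings $H^{s}\hookrightarrow L^{2\sigma+2}$ and $L^{(2\sigma+2)'}\hookrightarrow H^{-s}$ with $s=\frac{d\sigma}{2(\sigma+1)}\le 1$; globalization then comes from $L^2$-conservation of $v$, which is exactly \eqref{CLM}. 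You instead keep the unknown $u$, apply $P_\eps^{-1}$ to the whole equation, observe that $-iP_\eps^{-1}\Delta$ is a \emph{bounded} operator on $H^1$ and that $u\mapsto P_\eps^{-1}(|u|^{2\sigma}u)$ is locally Lipschitz on $H^1$ via $H^1\hookrightarrow L^{2\sigma+2}\,$ (whose validity is precisely the condition $\sigma\le\frac{2}{(d-2)_+}$, the same arithmetic as the paper's $s\le1$), and invoke Cauchy--Lipschitz. Since $P_\eps^{1/2}:H^1\to L^2$ is an isomorphism for fixed $\eps$ when $k=d$, the two formulations are conjugate; what your version buys is that no semigroup or Duhamel formula is needed at all, and the regularity $\partial_t u\in C(I;H^1)$ comes for free, which makes the rigorous justification of \eqref{CLE} and \eqref{CLM} cleaner than the approximation argument one would otherwise need. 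Two minor points: the remark about $P_\eps^{-1}$ ``compensating for the lack of compactness'' at $\sigma=\frac{2}{d-2}$ is a red herring --- only the continuity of the embedding $H^1\hookrightarrow L^{2^*}$ is used, never compactness, so nothing special happens at the critical power; and all your operator norms degenerate as $\eps\to0$, which is harmless here since $\eps>0$ is fixed, but worth flagging since the paper tracks $\eps$-dependence explicitly elsewhere.
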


This is a slight generalization of the result given in \cite{DLS}, where only the cubic case is treated. 
Note that we can allow for $\sigma = \frac{2}{(d-2)_+}$, i.e., the $H^1$-critical power, in contrast to the usual theory of NLS without off-axis variation, cf. \cite{KM}.

In order to prove all of these theorems, we shall employ the following change of unknown
\begin{equation}\label{eq:trans}
v(t,{\bf x} ):= P_{\eps}^{1/2}u(t, {\bf x}), 
\end{equation}
and rewrite the Cauchy problem \eqref{NLS} in the form
\begin{equation}\label{PNLS}
\left\{  
	\begin{array}{lcl}
		i \partial_{t}v + P_{\eps}^{-1}\Delta v + P_{\eps}^{-1/2}\big(|P_{\eps}^{-1/2}v|^{2\sigma}P_{\eps}^{-1/2}v\big) =0, \ t\in \R, \ {\bf x} \in \R^d,\\
	        v(0,{\bf x}) = P_{\eps}^{1/2}u_{0}({\bf x})\equiv v_0({\bf x}).
	\end{array}
\right.
\end{equation}
Instead of \eqref{CLM}, this new equation conserves
\[
\| v(t, \cdot) \|^{2}_{L^{2}} =\| P^{1/2}_{\eps}u(t, \cdot) \|^2_{L^2} =\| P^{1/2}_{\eps}u_{0}\|^2_{L^2} = \| v_{0} \|^{2}_{L^{2}},
\]
i.e., the usual $L^2$-conservation law. We therefore aim to set-up an $L^2$-based well-posedness theory for \eqref{PNLS}, written in Duhamel's form, i.e.
\[
v(t) = e^{itP^{-1}_{\eps}\Delta}v_{0} + i\int_{0}^{t} e^{i(t-s)P^{-1}_{\eps}\Delta}P_{\eps}^{-1/2}(|P_{\eps}^{-1/2}v|^{2}P_{\eps}^{-1/2}v)(s) \;ds.
\]
The advantage of working with $v$ instead of $u$ lies in the fact that it allows us to exploit the regularizing properties of the operator $P_{\eps}^{-1/2}$ acting on the nonlinearity. 
Roughly speaking, the action of $P_{\eps}^{-1/2}$ allows us to gain a derivative in $y\in \R^k$. 
However, we also note that the linear semi-group 
\begin{equation}\label{S}
S_\eps(t)=e^{itP_{\eps}^{-1}\Delta}
\end{equation} 
is no longer dispersive in the same way as the usual Schr\"odinger group $S_0(t)=e^{it \Delta}$.
Indeed, we can only expect ``nice" dispersive properties in the spatial 
directions $x\in \R^{d-k}$, where $P_\eps$ does not act, which will play an important role in the derivation of suitable Strichartz estimates (see below). 
It has been proved in \cite{Car} 
that in the case of full off-axis dependence, $S_\eps(t)$ does not admit any Strichartz estimates.
Note that this issue is not simply an artifact of our change of unknown $u \mapsto v$, since $S_\eps(t)$ also describes the dispersive properties of (the linear part of) 
the original equation for $u$, as 
can be seen by applying $P_\eps^{-1}$ to the first line of \eqref{NLS}. This issue has already been noticed in \cite{DLS}, but the change of unknown $u\mapsto v$, which 
allows us to treat the partial off-axis variation, is a novel idea of the present paper.

We also want to mention that the sign of the nonlinearity (which is focusing) does not play a role in the proofs given below, and hence all of our results 
also remain true in the defocusing case. 
\\

This paper is organized as follows: In the next section we shall introduce some notations and definitions.
Then in Section \ref{sec:SE}, we shall study the dispersive properties of $S_\eps(t)$ and derive appropriate Strichartz estimates in the case of 
partial off-axis dispersion. These will then be used in Section \ref{sec:CP} to prove global well-posedness of \eqref{PNLS} in the subcritical case. The critical case, and the case of full off-axis dispersion,
will be treated in Section \ref{sec:crit}.


\section{Basic notations and definitions} \label{sec:not} 

As mentioned in the Introduction, we shall denote ${\bf x}=(x, y)\in\R^{d-k}\times\R^k$ with the understanding that if either $k=0$ (no off-axis variation) or if $k=d$ (full off-axis variation), 
the variable $y$ does not appear. We will often use mixed Lebesgue spaces such as $L^{p}(\R_{x}^{d-k};L^q(\R_y^k))$, which will be shortly denoted by $L^p_xL^q_y$. 
These spaces are equipped with the following norms:
\begin{equation*}
\|f\|_{L^p_xL^q_y}:=\left(\int_{\R^{d-k}}\left(\int_{\R^k}|f(x, y)|^q\,dy\right)^{\frac{p}{q}}\,dx\right)^{\frac1p}.
\end{equation*}

We denote the usual Fourier transform of a function $f=f(x,y)$ as
\[
(\mathcal F f)(\xi,\eta)\equiv \widehat{f}(\xi,\eta) = \frac{1}{(2\pi)^{d/2}} \iint_{\R^d} f(x,y) e^{-i  (x\cdot \xi + y\cdot \eta) } \, dx \, dy,
\]
whereas the partial Fourier transform with respect to the $y$-variable only will be denoted by
\[
(\mathcal{F}_{y \to \eta} f)(x,\eta)\equiv \tilde{f}(x,\eta) = \frac{1}{(2\pi)^{k/2}}\int_{\R^k} f(x,y) e^{-i y\cdot \eta} \, dy.
\]
Analogously, we denote the partial Fourier transform in $x$ by $\mathcal{F}_{x \to \xi}$.

By recalling the (family of) differential operators $P_\eps=1-\eps^2\Delta_y$, defined in \eqref{P} with $0< \eps \le 1$, 
we shall introduce the class of mixed Sobolev-type spaces $L^{p}(\R_{x}^{d-k}; H^{s}(\R_{y}^k))$ of order $s\in \R$, via the following norm
\begin{equation*}
\|f\|_{L^{p}_{x} H^{s}_{y}}:=\big \|P_1^{s/2}f\big \|_{L^{p}_{x}L^{2}_{y}}\equiv \| (1+| \eta|^2)^{s/2} \tilde f \|_{L^{p}_{x}L^{2}_{\eta}}.
\end{equation*}
Obviously, the Fourier symbol corresponding to $P_1^{1/2}$ is nothing but the well-known Japanese bracket $\langle \eta \rangle = (1+| \eta|^2)^{1/2}$ used in the 
definition of $H^s$. 
Incorporating the small parameter $0<\eps\le 1$ comes at the expense of some (possibly) $\eps$-dependent constants: Indeed, for $s\ge 0$, we have
\begin{equation}\label{act1}
\eps^{s}\|f \|_{H^s} \le \|P_{\eps}^{s/2} f \|_{L^2} \le \|f \|_{H^s},
\end{equation}
as well as
\begin{equation}\label{act2}
\|f \|_{H^{-s}} \le \|P_{\eps}^{-s/2} f \|_{L^2} \le \eps^{-s}\|f \|_{H^{-s}}.
\end{equation}
From now on, we shall write $a \lesssim b$ whenever there exists a universal constant $C>0$, independent of $\eps$, 
such that $a\le Cb$. In general this constant $C$ may change from inequality to inequality.

Furthermore, for any time interval $I\subset \R$ we will also make use of the mixed space-time spaces 
$L^{q}( I_t , L^{p}(\R_x^{d-k};H^{s}(\R^{k}_y)))$ briefly denoted by $L^q_tL^p_xH^{s}_y(I)$, or simply $L^q_tL^p_xH^{s}_y$, whenever the time interval is clear. 
These spaces are equipped with the norm
\begin{equation*}
\|F\|_{L^q_tL^p_xH^s_y}:=\left(\int_{I}\|F(t)\|_{L^p_xH^s_y}^q\,dt\right)^{\frac1q}.
\end{equation*}
Associated with these spaces is the following notion of Strichartz admissibility.
\begin{definition}\label{def:adm}
Let $d>k\ge 0$ be given. We say that the pair $(q, r)$ is {\it admissible} if 
$2\le r\le\infty, 2\le q\le\infty$, and
\[
\frac2q=(d-k)\left(\frac12-\frac1r \right)=:\delta(r)
\]
where we omit the endpoint case, i.e., $(q,r) \ne (2,\frac{2(d-k)}{(d-k-2)_{+}})$ for $d-k\ge 2 $.
\end{definition}
Clearly, if $k=0$, this is just the usual admissibility condition for nonendpoint Strichartz pairs corresponding to the Schr\"odinger group $S_0(t)=e^{it \Delta}$ acting on $\R^d$.


\section{Dispersive properties with partial off-axis variation}\label{sec:SE}

In this section, we shall derive Strichartz estimates associated to $S_\eps(t)=e^{itP_{\eps}^{-1}\Delta}$ in the case of partial off-axis variation, i.e. $d>k$. To this end we 
first derive a set of basic dispersion estimates associated to this linear propagator.

\subsection{Dispersion estimate for $\bf{S_\eps(t)}$} 

Recall the notation $\delta(r)\ge0$ introduced in Definition \ref{def:adm}. Then we have the following.

\begin{proposition}\label{prop:DisE}
Let $r \in [2,\infty]$, and $t \neq 0$. Then, for any $\eps>0$,  
the group of $L^2$-unitary operators $S_\eps(t)=e^{itP_{\eps}^{-1}\Delta}$ continuously maps 
\[L^{r'}(\R_x^{d-k};H^{\delta(r)}(\R^{k}_y)) \to L^{r}(\R_x^{d-k};H^{-\delta(r)}(\R^{k}_y)), \quad \text{for} \ \ \frac{1}{r} + \frac{1}{r^{\prime}} = 1,\] 
and it holds that
\begin{equation}\label{DisE1}
 \| S_\eps(t) f  \|_{L^{r}_{x}H^{-\delta(r)}_{y}} \le |4 \pi t|^{-\delta(r)}  \| f  \|_{L^{r'}_{x}H^{\delta(r)}_{y}}.
\end{equation}
\end{proposition}

\begin{proof}
The estimate \eqref{DisE1} will in itself be a consequence of the following inequality, which is more directly 
linked to the explicit form of our propagator $S_\eps(t)=e^{itP_{\eps}^{-1}\Delta}$:
\begin{equation}\label{DisE}
 \| S_\eps(t) f  \|_{L^{r}_{x}L^{2}_{y}} \le |4 \pi t|^{-\delta(r)}  \| P_{\eps}^{\delta(r)}f  \|_{L^{r'}_{x}L^{2}_{y}}.
\end{equation}
Indeed, if we replace $f$ by $P_{\eps}^{-\frac{\delta(r)}{2}}f$ in \eqref{DisE} and keep in mind the basic estimates \eqref{act2} and \eqref{act1}, 
we obtain \eqref{DisE1} through the string of inequalities
\begin{align*}
\| S_\eps(t) f  \|_{L^{r}_{x}H^{-\delta(r)}_{y}}  &\, \le \| S_\eps(t) P_{\eps}^{-\frac{\delta(r)}{2}}f  \|_{L^{r}_{x}L^{2}_{y}}
\le |4 \pi t|^{-\delta(r)}  \| P_{\eps}^{\frac{\delta(r)}{2}}f \|_{L^{r'}_{x}L^{2}_{y}} \\
& \, \le |4 \pi t|^{-\delta(r)}  \| f  \|_{L^{r'}_{x}H^{\delta(r)}_{y}},
\end{align*}
which also ensures the continuity of $S_\eps(t)$. We also point out that there are no $\eps$-dependent constants involved in any of these inequalities.

In order to prove \eqref{DisE}, we first note that by density, it is enough to show this for $f\in \mathcal S(\R^d)$, the space of smooth and rapidly decaying functions. 
Moreover, we shall argue by duality and rather prove that for $f, g\in \mathcal S(\R^d)$,
\begin{equation}\label{dualest}
|\langle S_\eps(t)f , g \rangle_{L^{2}} |\le |4 \pi t|^{-\delta(r)}\|P_{\eps}^{\delta(r)}f\|_{L^{r'}_xL^{2}_y}\|g\|_{L^{r'}_xL^2_y}.
\end{equation}
In the trivial case $r=2$, $\delta(r)=0$, this estimate directly follows by Cauchy--Schwarz and the fact that $S_\eps(t)$ is unitary on $L^2$:
\begin{equation}\label{eq:103}
|\langle S_\eps(t)f , g \rangle_{L^{2}}| \le \|S_\eps(t)f\|_{L^{2}}\|g\|_{L^{2}} = \|f\|_{L^{2}}\|g\|_{L^{2}}.
\end{equation} 

Next, we treat the case $r=\infty$, $\delta(r)=\frac{d-k}{2}$, i.e., we want to show that for $f, g\in \mathcal S(\R^d)$ it holds that
\begin{equation}\label{eq:102}
|\langle S_\eps(t)f , g \rangle_{L^{2}}| \le |4 \pi t|^{-\frac{d-k}{2}}\|P_{\eps}^{\frac{d-k}{2}}f\|_{L^{1}_{x}L^{2}_{y}}\|g\|_{L^{1}_{x}L^2_{y}}.
\end{equation} 
To this end, we use Plancherel's identity to write
\begin{align*}
\langle S_\eps(t)f , g \rangle_{L^{2}} &= \big \langle\, \widehat{(S_\eps(t)f)} , \widehat{g} \, \big \rangle_{L^{2}} = 
\iint\limits_{\R^{d-k}\times \R^{k}}e^{-\frac{i(|\eta|^2+|\xi|^{2})t}{1+\eps^2|\eta|^2}}\widehat{f}(\xi,\eta)\overline{\widehat{g}(\xi,\eta)}\,d\xi \, d\eta\\
&=\int\limits_{\R^{k}}e^{-\frac{i|\eta|^2t}{1+\eps^2|\eta|^2}}\Big(\int\limits_{\R^{d-k}}e^{-\frac{i|\xi|^2t}{1+\eps^2|\eta|^2}}\widehat{f}(\xi,\eta)\overline{\widehat{g}(\xi,\eta)}\,d\xi \Big)d\eta.
\end{align*}
Here, we first compute the inner integral by writing out the partial Fourier transform in $\xi$ on $\widehat{g}$ to obtain
\begin{equation}\label{I1}
\begin{aligned}
\int\limits_{\R^{d-k}}e^{-\frac{i|\xi|^2t}{1+\eps^2|\eta|^2}}&\widehat{f}(\xi,\eta)\overline{\widehat{g}(\xi,\eta)}\,d\xi = \\&= 
\frac{1}{(2\pi)^{\frac{d-k}{2}}}\int\limits_{\R^{d-k}} e^{-\frac{i|\xi|^2t}{1+\eps^2|\eta|^2}}\widehat{f}(\xi,\eta)  \int\limits_{\R^{d-k}} e^{ix\cdot\xi}\overline{\tilde{g}(x,\eta)} \;dxd\xi \\
&= \int\limits_{\R^{d-k}} \overline{\tilde{g}(x,\eta)} \bigg( \frac{1}{(2\pi)^{\frac{d-k}{2}}} \int\limits_{\R^{d-k}} e^{ix\cdot\xi}e^{-\frac{i|\xi|^2t}{1+\eps^2|\eta|^2}}\widehat{f}(\xi,\eta) \;d\xi\bigg)dx\\
&= \int\limits_{\R^{d-k}} \overline{\tilde{g}(x,\eta)} \mathcal{F}^{-1}_{\xi \to x}\Big( e^{-\frac{i|\cdot|^2t}{1+\eps^2|\eta|^2}}\widehat{f}(\cdot,\eta) \Big)(x)dx,
\end{aligned}
\end{equation}
where we have used Fubini's theorem to change the order of integration.
We now recall that for $a\in \R $,
$$
\mathcal{F}^{-1}_{\xi \to x}\Big(e^{-\frac{i|\cdot|^{2}t}{a}}\Big)(z)= \Big(\frac{a}{2it}\Big)^{\frac{d-k}{2}}e^{\frac{ia|z|^{2}}{4t}}.
$$
By setting $a = 1 +\eps^{2}|\eta|^{2}$, we can express the integrand in the last line of \eqref{I1} as
\begin{align}\label{I2}
\mathcal{F}^{-1}_{\xi \to x}\Big( e^{-\frac{i|\cdot|^2t}{1+\eps^2|\eta|^2}}\widehat{f}(\cdot,\eta) \Big)(x)&
= \frac{1}{(2\pi)^{\frac{d-k}{2}}}\Big(\mathcal{F}_{\xi \to x}^{-1}\Big(e^{-\frac{i|\cdot|^2t}{1+\eps^2|\eta|^2}}\Big)\ast \tilde{f}(\cdot,\eta)  \Big)(x) \nonumber \\ 
&= \Big(\frac{1 +\eps^{2}|\eta|^{2}}{4 \pi it}\Big)^{\frac{d-k}{2}}\int \limits_{\R^{d-k}} e^{\frac{i(1 +\eps^{2}|\eta|^{2})|x-z|^{2}}{4t}} \tilde{f}(z,\eta)\;dz.
\end{align}
Now it is clear by \eqref{I1} and \eqref{I2} that
\begin{equation*}
\begin{aligned}
&(4\pi i t)^{\frac{d-k}{2}} \langle S_\eps(t)f , g \rangle_{L^{2}} \\
&= \iiint\limits_{\R^{k}\times (\R^{d-k})^{2}}(1 +\eps^{2}|\eta|^{2})^{\frac{d-k}{2}}e^{-\frac{i|\eta|^2t}{1+\eps^2|\eta|^2}}   e^{\frac{i(1 +\eps^{2}|\eta|^{2})|x-z|^{2}}{4t}} \tilde{f}(z,\eta)\overline{\tilde{g}(x,\eta)}\;dz \, dx \, d\eta.
\end{aligned}
\end{equation*}
This implies the following estimate:
\begin{align*}
|\langle S_\eps(t)f , g \rangle_{L^{2}}| \le |4\pi t|^{-\frac{d-k}{2}}\int\limits_{(\R^{d-k})^{2}}\Big(\int \limits_{\R^{k}}(1 +\eps^{2}|\eta|^{2})^{\frac{d-k}{2}} |\tilde{f}(z,\eta)| |\tilde{g}(x,\eta)|\;d\eta \Big)  dx \, dz.
\end{align*}
A Cauchy--Schwarz inequality in $\eta$, followed by Plancherel's identity, then gives
\begin{equation*}
\begin{aligned}
|\langle S_\eps(t)f , g \rangle_{L^{2}}| &\le |4\pi t|^{-\frac{d-k}{2}}\iint\limits_{(\R^{d-k})^{2}}\Big(\|\widehat{P}^{\frac{d-k}{2}}_{\eps}\tilde f(z,\cdot)\|_{L^2_\eta}\|\tilde g(x,\cdot)\|_{L^2_\eta} \Big)  dx \, dz\\
&\le |4 \pi t|^{-\frac{d-k}{2}}\|P^{\frac{d-k}{2}}_{\eps} f\|_{L^1_xL^2_y}\|g\|_{L^1_xL^2_y},
\end{aligned}
\end{equation*}
which is the desired estimate \eqref{eq:102}. 
Notice that by replacing $f\mapsto | 4\pi t|^{\frac{(d-k)}{2}}P_\eps^{-\frac{d-k}{2}}f$ in \eqref{eq:102}, 
this yields that the operator
\begin{equation}\label{eq:neu}
|4\pi t|^{\frac{d-k}{2}}S_\eps(t)P_\eps^{-\frac{d-k}{2}} \, : \, L^1_xL^2_y \to L^\infty_xL^2_y \ \ \ \text{is bounded,}
\end{equation}
with norm
\begin{equation*}
\||4\pi t|^{\frac{d-k}{2}}S_\eps(t)P_\eps^{-\frac{d-k}{2}}\| \le 1.
\end{equation*}
We have thus proved \eqref{DisE} in the two endpoint cases $r=2$ and $r=\infty$. The intermediate cases of \eqref{DisE} then follow by Stein's interpolation theorem \cite{St,StWe}. 

To this end,  we consider, for any $z\in \Omega:=\{0\le\re z\le1\}\subset \C$, the family of interpolating operators $T_z$ given by
\begin{equation*}
\mathcal F(T_zf)(\xi, \eta)=|4 \pi t|^{\frac{(d-k)z}{2}}(1+\eps^2|\eta|^2)^{-\frac{d-k}{2}z}e^{-it(1+\eps^2|\eta|^2)^{-1}(|\xi|^2+|\eta|^2)}\widehat f(\xi, \eta).
\end{equation*}
Clearly, for $z=0$, this is nothing but the Fourier transform of $S_\eps(t)$, which we know to be bounded $L^2\to L^2$ in view of \eqref{eq:103}. 
For $z=1$, we obtain the second endpoint case given by \eqref{eq:neu}.
In addition, it is straightforward to check that $\{T_z\}_{z\in \Omega}$ is an admissible family of linear operators satisfying the hypotheses of Theorem V.4.1 in \cite{StWe}. 
The theorem then requires us to bound $T_z$ at the edges of the strip $\Omega$: 

For $\mu  \in \R$, the following estimate for $z = 0 + i\mu$ uses \eqref{eq:103} and Plancherel in $y$, to give 
\begin{align*}
|\langle T_{0+i\mu}f , g \rangle_{L^{2}}| &= |\langle \tilde{S}_{\eps}(t)\big((|4 \pi t|^{-1}\widehat{P}_{\eps})^{\frac{-i(d-k)\mu}{2}}\tilde{f}\big), \tilde{g} \rangle_{L^{2}}|\\
&= \|e^{\frac{-i(d-k)\mu}{2}\ln(|4 \pi t|^{-1}\widehat{P}_{\eps})}\tilde{f}\|_{L_{x}^{2}L_{\eta}^{2}}\|g\|_{L^{2}} = \|f\|_{L_{x}^{2}L_{y}^{2}}\|g\|_{L_{x}^{2}L_{y}^{2}}.
\end{align*}
The estimate for $z=1+ i\mu$ follows similarly, but now using \eqref{eq:102}, so that 
\begin{align*}
|\langle T_{1+i\mu}f , g \rangle_{L^{2}}| &= |4 \pi t|^{\frac{(d-k)}{2}}|\langle \tilde{S}_{\eps}(t)\big((|4 \pi t|^{-1}\widehat{P}_{\eps})^{-i\frac{(d-k)\mu}{2}}\widehat{P}_{\eps}^{-\frac{(d-k)}{2}}\tilde{f}\big), \tilde{g} \rangle_{L^{2}}|\\
&\le \|\widehat{P}_{\eps}^{\frac{(d-k)}{2}}e^{\frac{-i(d-k)\mu}{2}\ln(|4 \pi t|^{-1}\widehat{P}_{\eps})}\widehat{P}_{\eps}^{-\frac{(d-k)}{2}}\tilde{f}\|_{L_{x}^{1}L_{\eta}^{2}}\|\tilde{g}\|_{L_{x}^{1}L_{\eta}^{2}} \\
&\le \|f\|_{L_{x}^{1}L_{y}^{2}}\|g\|_{L_{x}^{1}L_{y}^{2}}.
\end{align*}
Noting that the constants produce no growth in $z \in \C$, then the quoted version of Stein interpolation in \cite{StWe} implies for $0 \le \theta =1 - \frac{2}{r} \le 1$ and $r \in [2,\infty]$ the following estimate
$$
|4 \pi t|^{\delta(r)}\|P_{\eps}^{-\delta(r)}f\|_{L^{r}_{x}L^{2}_{y}} = \|T_{\theta}f\|_{L^{r}_{x}L^{2}_{y}} \le \|f\|_{L^{r'}_{x}L^{2}_{y}},
$$
which by replacing $f$ by $P_{\eps}^{\delta(r)}f$ and dividing the above inequality by $|4 \pi t|^{\delta(r)}$ gives \eqref{dualest}. 
Again, we note that there are no $\eps$-dependent constants arising from this interpolation step. 
Moreover, since the proof of this theorem exploits a density argument using simple functions, the result directly applies also to the mixed spaces $L^r_xL^2_y$ under consideration. 
\end{proof}

\begin{remark}
Note that, as $\eps \to 0$, the estimate \eqref{DisE} converges to
\[
\big \| S_0(t) f \big \|_{L^{r}_{x}L^{2}_{y}} \le  |4 \pi t|^{-(d-k)\left(\frac12-\frac{1}{r}\right)} \big \| f \big \|_{L^{r'}_{x}L^{2}_{y}},
\]
which is similar to the usual dispersion estimate for the Schr\"odinger group in dimension $d-k\in \N$ and again reflects the fact that we don't obtain dispersion in the $y$-coordinates 
when $\eps >0$. Deriving estimate \eqref{DisE1} from \eqref{DisE} has the advantage that we can use standard Sobolev spaces $H^s$, 
independent of $\varepsilon$, to measure the regularity in $y$ (instead of employing the operator $P_\eps$).
The price to pay is that \eqref{DisE1} no longer converges to the classical dispersion estimate in the limit $\eps \to 0$ (except in the case $r=2$ for which $\delta(r) = 0$). 
But since in this work
we are not concerned with the limit $\eps \to 0$, we shall ignore this issue in the following and base our Strichartz estimates on \eqref{DisE1}. 
\end{remark}

\subsection{Strichartz estimates}
Exploiting the dispersion estimate \eqref{DisE1}, we shall now prove space-time Strichartz estimates associated to $S_\eps(t)$. These estimates also follow from abstract arguments as in \cite{BCD, GV3, KT}. 
For the sake of concreteness and due to our somewhat unusual function spaces, we shall give their proof in the nonendpoint case. 

\begin{remark} The case of endpoint Strichartz estimates, i.e., $(q, r)=\big(2, \frac{2(d-k)}{(d-k-2)_{+}}\big)$ for $d-k\ge 2$, in principle could also be dealt 
with as in \cite{KT}, but since we never make use of it in our analysis, 
we shall not pursue this issue any further. 
\end{remark}

\begin{proposition}[Strichartz estimates]\label{prop:STR}
\ Let $S_{\eps}(t) = e^{it P_\eps^{-1} \Delta}$ and $(q, r), (\gamma, \rho)$ be two arbitrary admissible Strichartz pairs with $0<\delta(r), \delta(\rho)<1$. 
Then for any time interval $I$, there exist constants $C_1, C_2>0$, independent of $\eps$ and $I$, such that
\begin{equation}\label{S1}
\| S_\eps(\cdot)f \|_{L^{q}_{t}L^{r}_{x}H^{-\delta(r)}_{y}} \le C_1\| f \|_{L^{2}}, 
\end{equation}
as well as
\begin{equation}\label{S2}
\left\|\int_{0}^{t}S_\eps(\cdot-s)F(s)\;ds \right\|_{L^{q}_{t}L^{r}_{x}H^{-\delta(r)}_{y}} \le C_2\| F \|_{ L^{\gamma^{\prime}}_{t}L^{\rho^{\prime}}_{x}H^{\delta(\rho)}_{y} }. 
\end{equation}
\end{proposition}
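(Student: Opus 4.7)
My plan is the classical $TT^\ast$ argument in the style of Ginibre--Velo / Keel--Tao, adapted to the mixed spaces $L^r_x H^{\pm\delta(r)}_y$. The homogeneous bound \eqref{S1} follows from the dispersion estimate \eqref{DisE1} plus one application of Hardy--Littlewood--Sobolev in the time variable; the inhomogeneous bound \eqref{S2} follows by combining two homogeneous bounds (one for each admissible pair) with a Christ--Kiselev truncation to restore the upper limit $t$.

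First I would prove \eqref{S1}. Setting $Tf := S_\eps(\cdot)f$, its formal $L^2$-adjoint is $T^\ast F = \int_{\R} S_\eps(-s) F(s)\,ds$, so
\[
TT^\ast F(t) = \int_{\R} S_\eps(t-s) F(s)\, ds.
\]
Applying \eqref{DisE1} inside the integral gives
\[
\|TT^\ast F(t)\|_{L^r_x H^{-\delta(r)}_y} \lesssim \int_{\R} |t-s|^{-\delta(r)} \|F(s)\|_{L^{r'}_x H^{\delta(r)}_y}\, ds.
\]
Since admissibility is $\frac{2}{q} = \delta(r)$ and we are in the range $0<\delta(r)<1$, the one-dimensional Hardy--Littlewood--Sobolev inequality produces
\[
\|TT^\ast F\|_{L^q_t L^r_x H^{-\delta(r)}_y} \lesssim \|F\|_{L^{q'}_t L^{r'}_x H^{\delta(r)}_y}.
\]
The two sides are dual spaces under the natural pairing, so the $TT^\ast$ identity (equivalently, a duality argument) immediately gives $\|Tf\|_{L^q_t L^r_x H^{-\delta(r)}_y} \lesssim \|f\|_{L^2}$, i.e., \eqref{S1}.

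For \eqref{S2} I would first establish the non-retarded version, i.e., the same estimate but with the integral running over all of $\R$. Applying the homogeneous estimate separately for each admissible pair, I have $T: L^2 \to L^q_t L^r_x H^{-\delta(r)}_y$ (using $(q,r)$) and, dually, $T^\ast: L^{\gamma'}_t L^{\rho'}_x H^{\delta(\rho)}_y \to L^2$ (using $(\gamma,\rho)$); composition yields the bound for the untruncated operator. To pass to the truncated operator of \eqref{S2} I would invoke the Christ--Kiselev lemma in the time variable, whose hypothesis $\gamma' < q$ is automatic here: $\delta(r)<1$ forces $q>2$, while $\delta(\rho)>0$ forces $\gamma<\infty$, so $\gamma' < 2 < q$.

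The only nontrivial point specific to this setting, and the one where I would be most careful, is the matching of the $y$-Sobolev exponents through $TT^\ast$ and duality. The dispersion estimate \eqref{DisE1} puts $H^{-\delta(r)}_y$ on the output and $H^{+\delta(r)}_y$ on the input, and the dual of $L^r_x H^{-\delta(r)}_y$ is exactly $L^{r'}_x H^{\delta(r)}_y$; once this symmetry is recorded, the argument proceeds in parallel to the scalar Schr\"odinger case and no further technical obstacle arises. The endpoint $\delta(r)=1$ (i.e. $q=2$) is automatically excluded by the hypothesis $0<\delta(r)<1$, so one need not enter the Keel--Tao bilinear machinery.
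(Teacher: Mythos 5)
Your proposal is correct and follows essentially the same route as the paper: the $T T^\ast$ argument with the dispersion estimate \eqref{DisE1} and Hardy--Littlewood--Sobolev in time to bound the untruncated operator, the operator-norm identity (duality) to extract \eqref{S1}, composition of the two admissible-pair bounds for the non-retarded inhomogeneous estimate, and the Christ--Kiselev lemma for the truncation. Your explicit check that $\gamma'<2<q$ (so the Christ--Kiselev hypothesis holds) is a detail the paper leaves implicit, but there is no substantive difference in approach.
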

\begin{proof}
We start by first noticing that \eqref{S1} is equivalent to saying that the map $f\mapsto S_\eps(t)f$ is bounded as an operator $L^2 \to L^q_tL^r_xH^{-\delta(r)}_y$. 
Let us define the operator $T_\eps:L^{q'}_tL^{r'}_xH_y^{\delta(r)}\to L^2$ by
\begin{equation*}
T_\eps F=\int_\R S_\eps(-s)F(s)\,ds
\end{equation*}
and note that its formal adjoint $T_\eps^{\ast}$ is the map $ f \mapsto S_\eps(t)f$. Next, we shall show that
\begin{equation*}
T_\eps^\ast T_\eps F(t)=\int_\R S_\eps(t-s)F(s)\,ds
\end{equation*}
is bounded as an operator $L^{q'}_tL^{r'}_xH^{\delta(r)}_y \to L^q_tL^r_xH^{-\delta(r)}_y$. By the generalized Minkowski's inequality we have 
\begin{equation*}
\left\|\int_\R S_\eps(\cdot-s)F(s)\,ds \right\|_{L^q_tL^r_xH^{-\delta(r)}_y}\le \left\|\int_\R\|S_\eps(\cdot-s)F(s)\|_{L^r_xH^{-\delta(r)}_y}\,ds\right\|_{L^q_t},
\end{equation*}
and applying the dispersion estimate \eqref{DisE1}, it follows that 
\begin{equation*}
\|S_\eps(t-s)F(s)\|_{L^r_xH^{-\delta(r)}_y}\le |4\pi(t-s)|^{-\delta(r)}\|F(s)\|_{L^{r'}_xH^{\delta(r)}_y}.
\end{equation*}
Hence recalling that $\delta(r)=\frac2q < 1$, we see it is then possible to apply the Hardy--Littlewood--Sobolev inequality in order to obtain 
\begin{equation*}\begin{aligned}
\left\|\int_\R S_\eps(\cdot-s)F(s)\,ds\right\|_{L^q_tL^r_xH^{-\delta(r)}_y}& \le\left\|\int_\R |4\pi(\cdot-s)|^{-\delta(r)}\|F(s)\|_{L^{r'}_xH^{\delta(r)}_y}\,ds\right\|_{L_t^q}\\
&\le C\|F\|_{L^{q'}_tL^{r'}_xH^{\delta(r)}_y}.
\end{aligned}\end{equation*}
We thus have proven that the operator $T_\eps^\ast T_\eps:L^{q'}_tL^{r'}_xH^{\delta(r)}_y\to L^q_tL^r_xH^{-\delta(r)}_y$ is bounded. 
A standard functional analysis result for operators on Banach spaces (see, e.g., \cite{BCD}) states that
\begin{equation*}
\|T_\eps\|^2_{\mathcal L(L^{q'}_tL^{r'}_xH_y^{\delta(r)};L^2)}=\|T_\eps^\ast\|^2_{\mathcal L(L^2:L^q_tL^r_xH^{-\delta(r)}_y)}
=\|T_\eps^\ast T_\eps\|_{\mathcal L(L^{q'}_tL^{r'}_xH^{\delta(r)}_y;L^q_tL^r_xH^{-\delta(r)}_y)}.
\end{equation*}
This consequently implies that both $$T_\eps:L^{q'}_tL^{r'}_xH^{\delta(r)}_y\to L^2 \quad \text{and}\quad
T_\eps^\ast:L^2\to L^q_tL^r_xH^{-\delta(r)}_y$$ are bounded with norms independent of $\eps$. In particular, \eqref{S1} is proved. Furthermore, we note that this holds for any nonendpoint admissible pair $(q, r)$. 

Now, choose any arbitrary (nonendpoint) admissible pairs $(\gamma, \rho)$ and $(q, r)$ 
such that $$T_\eps:L^{\gamma'}_tL^{\rho'}_{x}H^{\delta(\rho)}_{y}\to L^{2} \quad \text{and} \quad T_\eps^{\ast}:L^{2} \to L^{q}_{t}L^{r}_{x}H^{-\delta(r)}_{y}.$$ 
By combining the estimates for the operators $T_\eps$, $T_\eps^\ast$, we then infer that 
$$T_\eps^\ast T_\eps:L^{\gamma'}_{t}L^{\rho'}_{x}H^{\delta(\rho)}_{y} \to L^{q}_{t}L^{r}_{x}H^{-\delta(r)}_y $$
is bounded, i.e.,
\begin{equation*}
\left\|\int_\R S_\eps(\cdot-s)F(s)\,ds\right\|_{L^{q}_{t}L^{r}_{x}H^{-\delta(r)}_{y}}\le C\|F\|_{L^{\gamma'}_{t}L^{\rho'}_{x}H_{y}^{\delta(\rho)}},
\end{equation*}
for any arbitrary $(q, r), (\gamma, \rho)$. We can then invoke Theorem $1.2$ from the paper \cite{CK} by Christ and Kiselev to conclude the retarded estimate
\begin{equation*}
\left\|\int_{s<t}S_\eps(\cdot-s)F(s)\,ds\right\|_{L^{q}_{t}L^{r}_{x}H^{-\delta(r)}_{y}}\le C\|F\|_{L^{\gamma'}_{t}L^{\rho'}_{x}H^{\delta(\rho)}_{y}}.
\end{equation*}
In summary, this proves the desired result. 
\end{proof}


\section{The Cauchy problem for partial off-axis variation in the subcritical case}\label{sec:CP}


In this section we shall give the proof of Theorem \ref{thm:sub} by proving a global $L^2$-based well-posedness result for \eqref{PNLS} with subcritical nonlinearities. 
In a second step we shall establish the additional $H^1$-regularity of the solution.

\subsection{Well-posedness in terms of $v$}\label{subsc:WPL2}

We rewrite \eqref{PNLS} using Duhamel's formulation, i.e.,
\begin{equation}\label{intPNLS}
v(t) = S_\eps(t) v_{0} + i\int_{0}^{t} S_{\eps}(t-s)P_{\eps}^{-1/2}(|P_{\eps}^{-1/2}v|^{2\sigma}P_{\eps}^{-1/2}v)(s) \;ds=: \Phi(v)(t).
\end{equation}
For the sake of brevity, we shall also write
\[
\Phi(v)(t) = S_\eps(t)v_{0} + \mathcal{N}(v)(t)
\]
and denote
\begin{equation}\label{nlterm}
\mathcal{N}(v)(t) := i\int_{0}^{t} S_\eps(t-s)P_{\eps}^{-1/2}g(P_{\eps}^{-1/2}v(s)) \;ds,
\end{equation}
where $g(z) = |z|^{2\sigma}z$ with $\sigma >0$.

Of course, the basic idea is to prove that $v\mapsto \Phi(v)$ is a contraction mapping in a suitable Banach space. To this end, the following lemma is key.

\begin{lemma}\label{lem:Nlest}
Let $d-k>0$. Fix $T>0$ and choose the admissible pair $$(\gamma,\rho)= \left(\frac{4(\sigma+1)}{(d-k)\sigma},2(\sigma+1)\right).$$ 
Then, in the space-time slab $\R^{d}\times [0,T] $ the inequality 
\begin{align*}
& \, \|\mathcal{N}(v)-\mathcal{N}(v')\|_{L^{\gamma}_{t}L^{\rho}_{x}H^{-\delta(\rho)}_{y}} \\
& \, \lesssim \eps^{-2(\sigma+1)}T^{1-\frac{(d-k)\sigma}{2}}\left(\|v\|_{L^{\gamma}_{t}L^{\rho}_{x}H^{-\delta(\rho)}_{y}}^{2\sigma}+\|v'\|_{L^{\gamma}_{t}L^{\rho}_{x}H^{-\delta(\rho)}_{y}}^{2\sigma}\right)
\|v-v'\|_{L^{\gamma}_{t}L^{\rho}_{x}H^{-\delta(\rho)}_{y}} ,
\end{align*}
holds, provided $0< \sigma \le \frac{2}{(d-2)_{+}} $.
\end{lemma}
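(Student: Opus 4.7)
The strategy is an NLS-type contraction argument for the Duhamel operator in the Strichartz space $L^{\gamma}_{t}L^{\rho}_{x}H^{-\delta(\rho)}_{y}$. I would first apply the inhomogeneous estimate (3.2) of Proposition~\ref{prop:STR} with both admissible pairs taken equal to $(\gamma,\rho)$, which is legitimate since $0<\delta(\rho)<1$ holds throughout the stated range. This reduces the claim to the bound
\[
\bigl\|P_{\eps}^{-1/2}\bigl[g(w)-g(w')\bigr]\bigr\|_{L^{\gamma'}_{t}L^{\rho'}_{x}H^{\delta(\rho)}_{y}} \lesssim \eps^{-2(\sigma+1)}T^{1-\frac{(d-k)\sigma}{2}}\bigl(\|v\|^{2\sigma}+\|v'\|^{2\sigma}\bigr)\|v-v'\|,
\]
where $w:=P_{\eps}^{-1/2}v$, $w':=P_{\eps}^{-1/2}v'$, and the three right-hand norms are all the Strichartz norm $L^{\gamma}_{t}L^{\rho}_{x}H^{-\delta(\rho)}_{y}$. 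The nonlinearity will be dealt with pointwise via $|g(z_{1})-g(z_{2})|\lesssim(|z_{1}|^{2\sigma}+|z_{2}|^{2\sigma})|z_{1}-z_{2}|$ for $g(z)=|z|^{2\sigma}z$.

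The main tool is the elementary Fourier-multiplier inequality
\[
(1+|\eta|^{2})^{\theta/2}(1+\eps^{2}|\eta|^{2})^{-1/2}\lesssim\eps^{-\theta},\qquad \theta\in[0,1],
\]
which yields $\|P_{\eps}^{-1/2}h\|_{H^{s}_{y}}\lesssim\eps^{-\theta}\|h\|_{H^{s-\theta}_{y}}$ for any $s\in\R$. Combined with the Sobolev embedding $H^{k\sigma/(2(\sigma+1))}_{y}\hookrightarrow L^{\rho}_{y}$ in $\R^{k}$ and its dual $L^{p}_{y}\hookrightarrow H^{-k\sigma/(2(\sigma+1))}_{y}$ with $p=2(\sigma+1)/(2\sigma+1)$, the specific choice $\theta=d\sigma/(2(\sigma+1))$ produces the two key bounds
\[
\|P_{\eps}^{-1/2}F\|_{H^{\delta(\rho)}_{y}}\lesssim\eps^{-\theta}\|F\|_{L^{p}_{y}},\qquad \|P_{\eps}^{-1/2}v\|_{L^{\rho}_{y}}\lesssim\eps^{-\theta}\|v\|_{H^{-\delta(\rho)}_{y}}.
\]
The constraint $\theta\le 1$ reads exactly $\sigma\le 2/(d-2)_{+}$, which is the hypothesis of the lemma.

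Finally, I would apply Hölder in $y$ and in $x$ with the matching exponent $a=b=\rho=2(\sigma+1)$; the identity $(2\sigma+1)/\rho=1/p=1/\rho'$ closes all indices and gives
\[
\bigl\|(|w|^{2\sigma}+|w'|^{2\sigma})|w-w'|\bigr\|_{L^{\rho'}_{x}L^{p}_{y}}\lesssim\bigl(\|w\|^{2\sigma}_{L^{\rho}_{x}L^{\rho}_{y}}+\|w'\|^{2\sigma}_{L^{\rho}_{x}L^{\rho}_{y}}\bigr)\|w-w'\|_{L^{\rho}_{x}L^{\rho}_{y}}.
\]
A Hölder inequality in time, $\|\cdot\|_{L^{\gamma'}_{t}([0,T])}\le T^{1-\frac{2(\sigma+1)}{\gamma}}\|\cdot\|_{L^{\gamma/(2\sigma+1)}_{t}}$, produces the sharp factor $T^{1-(d-k)\sigma/2}$. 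Collecting one outer and $2\sigma+1$ inner applications of the multiplier bound yields an overall $\eps$-loss $\eps^{-(2\sigma+2)\theta}=\eps^{-d\sigma}\le\eps^{-2(\sigma+1)}$, the last inequality following from $d\sigma\le 2(\sigma+1)$ (which is equivalent to $\sigma\le 2/(d-2)_{+}$) together with $\eps\le 1$. The case $k=0$ is trivial since then $P_{\eps}=1$ and the lemma reduces to the classical subcritical NLS estimate on $\R^{d}$. The main delicate point is the bookkeeping that aligns the Sobolev-embedding exponent in $y$, the Hölder split in $(x,y)$, and the $\eps^{-\theta}$ multiplier loss at the single value $\theta=d\sigma/(2(\sigma+1))$; the bound $\theta\le 1$ is precisely what forces the restriction $\sigma\le 2/(d-2)_{+}$.
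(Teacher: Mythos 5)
Your proposal is correct and follows essentially the same route as the paper: Strichartz estimate \eqref{S2} with both pairs equal to $(\gamma,\rho)$, the pointwise Lipschitz bound on $g$, the Sobolev embedding $H^{k\sigma/(2(\sigma+1))}_y\hookrightarrow L^{\rho}_y$ and its dual, then H\"older in $y$, $x$ and $t$. The only difference is cosmetic: the paper tracks the $\eps$-loss via the crude bound \eqref{act2} (a full $\eps^{-1}$ per application of $P_\eps^{-1/2}$, giving $\eps^{-2(\sigma+1)}$ directly, with the inclusions $H^{-\delta(\rho)}\subset H^{-(1-s)}$ absorbing the leftover regularity), whereas your fractional multiplier bound with $\theta=\frac{d\sigma}{2(\sigma+1)}$ gives the slightly sharper $\eps^{-d\sigma}$, which you then correctly relax to the stated $\eps^{-2(\sigma+1)}$.
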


The case $k=0$ is classical and thus we will only give the proof for $d>k>0$.
\begin{proof} 
We first note that for our pair $(\gamma,\rho)$ to be non-endpoint admissible for $d-k\ge 2$, we require that $\gamma >2$, which in turn is equivalent to 
$\sigma <\frac{2}{(d-k-2)_{+}}$. However, this condition will always be fulfilled since
\[
\sigma \le \frac{2}{(d-2)_{+}}  < \frac{2}{(d-k-2)_+}.
\]
As a consequence, we also have that $\delta(\rho) = \frac{(d-k)\sigma}{2(\sigma +1)}<1$. 

Now, as a first step we apply the Strichartz estimate \eqref{S2} and note that
\begin{align*}
 \|\mathcal{N}(v)&-\mathcal{N}(v')\|_{L^{\gamma}_{t}L^{\rho}_{x}H^{-\delta(\rho)}_{y}} \\
 &\le C_{2}\| P_{\eps}^{-1/2}(g(P_{\eps}^{-1/2}v)-g(P_{\eps}^{-1/2}v'))\|_{L^{\gamma'}_{t}L^{\rho'}_{x}H^{\delta(\rho)}_{y}} \\
  &\le \eps^{-1}C_{2} \| g(P_{\eps}^{-1/2}v)-g(P_{\eps}^{-1/2}v')\|_{L^{\gamma'}_{t}L^{\rho'}_{x}H^{-(1-\delta(\rho))}_{y}},
\end{align*}
where we have also used the scaling \eqref{act2} to obtain the factor $\eps^{-1}$. Next, by a Sobolev embedding we have that $ H^{s}(\R^{k})  \hookrightarrow L^{\rho}(\R^{k})$, where 
$$
s = k\left(\frac{1}{2} - \frac{1}{2(\sigma+1)}\right) = \frac{k\sigma}{2(\sigma+1)} \in \Big(0,\frac{k}{2} \Big).
$$
In turn, this also implies the dual embedding $L^{\rho'}(\R^{k}) \hookrightarrow H^{-s}(\R^{k})$.
Now, if we impose that 
$$
1 \ge s + \delta(\rho)= \frac{d\sigma}{2(\sigma+1)},
$$
which is so whenever $\sigma \le \frac{2}{(d-2)_{+}}$, then $H^{-s}(\R^{k})\hookrightarrow H^{-(1-\delta(\rho))}(\R^{k})$.
Together these allow us to estimate
 \begin{align*}
\| g(P_{\eps}^{-1/2}v)&-g(P_{\eps}^{-1/2}v')\|_{H^{-(1-\delta(\rho))}_{y}} \le \| g(P_{\eps}^{-1/2}v)-g(P_{\eps}^{-1/2}v')\|_{H^{-s}_{y}}\\
&\le C_{\sigma}\| (|P_{\eps}^{-1/2}v|^{2\sigma} + |P_{\eps}^{-1/2}v'|^{2\sigma})P_{\eps}^{-1/2}(v - v')  \|_{L^{\rho'}_{y}} =(\ast),
\end{align*}
where we have also used that for all $z,w \in \C$, 
$$|g(z) - g(w)| \le C_{\sigma}(|z|^{2\sigma}  + |w|^{2\sigma} )|z-w|.
$$
Now, recall that $\rho =  2(\sigma+1)$ and hence $\frac{1}{\rho'}= \frac{2\sigma}{\rho}+\frac{1}{\rho}$. Thus, by first applying H\"older's inequality and using \eqref{act2}, we obtain
\begin{align*}
(\ast) &\lesssim (\|P_{\eps}^{-1/2}v\|_{L^{\rho}_{y}}^{2\sigma} + \|P_{\eps}^{-1/2}v'\|_{L^{\rho}_{y}}^{2\sigma})\|P_{\eps}^{-1/2}(v - v')  \|_{L^{\rho}_{y}}\\
 &\lesssim \eps^{-(2\sigma+1)}(\|v\|_{H^{-(1-s)}_{y}}^{2\sigma} + \|v'\|_{H^{-(1-s)}_{y}}^{2\sigma})\| v - v'  \|_{H^{-(1-s)}_{y}}\\
&\lesssim \eps^{-(2\sigma+1)}(\|v\|_{H^{-\delta(\rho)}_{y}}^{2\sigma} + \|v'\|_{H^{-\delta(\rho)}_{y}}^{2\sigma})\| v - v'  \|_{H^{-\delta(\rho)}_{y}},
\end{align*}
where the last inequality follows from $H^{-\delta(\rho)}(\R^{k}) \hookrightarrow H^{-(1-s)}(\R^{k})$, by the same arguments as before. 
Employing H\"older's inequality once more in $x$, we consequently infer
\begin{align*}
\| g(P_{\eps}^{-1/2}v)&-g(P_{\eps}^{-1/2}v')\|_{L^{\rho'}_{x}H^{-(1-\delta(\rho))}_{y}}\\
&\lesssim \eps^{-(2\sigma+1)}(\|v\|_{L^{\rho}_{x}H^{-\delta(\rho)}_{y}}^{2\sigma} + \|v'\|_{L^{\rho}_{x}H^{-\delta(\rho)}_{y}}^{2\sigma})\| v - v'  \|_{L^{\rho}_{x}H^{-\delta(\rho)}_{y}}.
\end{align*}
From here, we compute that
$$
\frac{1}{\gamma'} = 1-\frac{(d-k)\sigma}{2} +\frac{2\sigma}{\gamma} + \frac{1}{\gamma} \, .
$$
Thus, taking the $L^{\gamma'}$ norm in $t$ and applying H\"older's inequality yields the result of the lemma.
\end{proof}

Using Lemma \ref{lem:Nlest}, we are now able to prove global well-posedness for \eqref{PNLS} in the subcritical case. 
In doing so, we will require a positive exponent $$\alpha \equiv {1-\frac{(d-k)\sigma}{2}}$$ of $T$ in the estimate obtained in Lemma \ref{lem:Nlest}, i.e., we require $\sigma < \frac{2}{d-k}.$
Since Lemma \ref{lem:Nlest} holds for $\sigma \le \frac{2}{(d-2)_{+}}$, we need to distinguish the cases $k \le 2$ and $k>2$ in the following.

One notices immediately that for $k\le 2$, we have that $\frac{2}{d-k} \le \frac{2}{(d-2)_{+}}$, which in turn 
implies that, in this case, we require the stronger assumption $\sigma < \frac{2}{d-k}$ to ensure $\alpha >0$. However, for $k>2$ (and thus $d>3$), it holds that
\[
\frac{2}{d-2} < \frac{2}{d-k}<\frac{2}{(d-k-2)_+},
\] 
and hence no new restriction arises. We also note that for $k>2$, the exponent of $T^\alpha$ is positive and is $L^{2}$-subcritical 
in the sense that when $\sigma = \frac{2}{d-2}$ then 
$$
\alpha = 1 - \frac{(d-k)\sigma}{2} = \frac{k-2}{d-2}>0.
$$

With this in mind, we can now prove the following result.

\begin{proposition}\label{prop:GWPL2}
Let $d> k\ge 0$ and
\begin{itemize}
\item either $k\le 2$ and $0 \le \sigma < \frac{2}{d-k}$ 
\item or $k>2$ and $0 \le \sigma \le \frac{2}{d-2}$.
\end{itemize}
Then for any $v_{0} \in L^{2}(\R^{d})$, there exists a unique global solution to \eqref{PNLS}
\[ 
v \in C(\R_t,L^{2}(\R^{d}))\cap L^{q}_{\rm loc}(\R_t;L^{r}(\R_x^{d-k}; H^{-\delta(r)}(\R_{y}^{k})))
\]
for any (nonendpoint) admissible pair $(q, r)$. Moreover, $v$ depends continuously on the initial data and satisfies 
\[
\|v(t,\cdot)\|_{L^{2}} = \|v_{0}\|_{L^{2}} \quad \forall \, t\in\R.
\]
\end{proposition}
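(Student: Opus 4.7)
The plan is to combine the Strichartz estimates of Proposition \ref{prop:STR} with the nonlinear estimate of Lemma \ref{lem:Nlest} in a standard Banach fixed point argument on Duhamel's formula \eqref{intPNLS}. Fix the admissible pair $(\gamma,\rho)=\bigl(\tfrac{4(\sigma+1)}{(d-k)\sigma},\,2(\sigma+1)\bigr)$ from Lemma \ref{lem:Nlest} and define, for $T, R > 0$ to be chosen,
$$X_{T,R} := \bigl\{v \in L^{\gamma}([-T,T]; L^{\rho}_{x}H^{-\delta(\rho)}_{y}) : \|v\|_{L^{\gamma}_{t}L^{\rho}_{x}H^{-\delta(\rho)}_{y}} \le R\bigr\},$$
which is a complete metric space with the induced distance. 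The homogeneous Strichartz estimate \eqref{S1} gives $\|S_{\eps}(t)v_0\|_{L^{\gamma}_{t}L^{\rho}_{x}H^{-\delta(\rho)}_{y}} \le C_1 \|v_0\|_{L^2}$, while Lemma \ref{lem:Nlest} (applied with $v'\equiv 0$ and with general $v,v'$) controls $\mathcal N(v)$ and $\mathcal N(v)-\mathcal N(v')$ by a constant times $\eps^{-2(\sigma+1)} T^{\alpha}$ and powers of the $X_{T,R}$-norm. Under the standing hypotheses the exponent $\alpha := 1 - \tfrac{(d-k)\sigma}{2}$ is strictly positive, so setting $R = 2C_1 \|v_0\|_{L^2}$ and then choosing $T$ small enough (depending only on $R$, $\eps$, $\sigma$, $d$ and $k$) turns $\Phi$ into a contraction of $X_{T,R}$ into itself. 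The unique fixed point yields a local solution $v \in X_{T,R}$, and the Lipschitz nature of the fixed point in $v_0$ supplies continuous dependence on the initial datum.

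To complete the first part of the statement, I would then feed $v$ back into \eqref{intPNLS} and apply the inhomogeneous Strichartz estimate \eqref{S2} with an arbitrary non-endpoint admissible pair $(q,r)$ on the left and $(\gamma,\rho)$ on the right; the computation already performed in the proof of Lemma \ref{lem:Nlest} shows the right-hand side is finite, so $v \in L^{q}([-T,T]; L^{r}_{x}H^{-\delta(r)}_{y})$ for every admissible $(q,r)$. Choosing in particular $(q,r)=(\infty,2)$ yields $v \in L^\infty_t L^2$, and upgrading to continuity $v \in C([-T,T]; L^2)$ follows from the strong $L^2$-continuity of the unitary group $S_\eps(t)$ combined with continuity of the Duhamel term, verified by dominated convergence applied to the Strichartz-type bound on $\mathcal N(v)$.

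For the $L^2$ conservation law, the formal computation uses self-adjointness of $P_\eps^{-1/2}$ on $L^2$ to give
$$\frac{d}{dt}\|v(t)\|_{L^2}^2 \, = \, -2\,\im\,\bigl\langle P_\eps^{-1/2}\bigl(|P_\eps^{-1/2}v|^{2\sigma}P_\eps^{-1/2}v\bigr), v \bigr\rangle_{L^2} \, = \, -2\,\im \int_{\R^d} \bigl|P_\eps^{-1/2}v\bigr|^{2\sigma + 2}\,d{\bf x} \, = \, 0,$$
together with the skew-symmetry of $iP_\eps^{-1}\Delta$ on $L^2$. To make this rigorous at the $L^2$ regularity, I would approximate $v_0$ by Schwartz data $v_0^n$ and run the contraction argument at an $H^s$ level with $s$ large (where the nonlinearity is classical and the identity above is justified by integration by parts), then pass to the limit using continuous dependence. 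Since the local existence time in the fixed point argument depends only on $\|v_0\|_{L^2}$, the conservation law immediately promotes the local solution to a global one by iteration on both sides of $t=0$. I expect the main technical obstacle to be precisely this $L^2$ conservation at the borderline regularity, because $\mathcal N(v)$ is only a distribution under the hypotheses of the proposition and the formal energy identity has to be recovered by approximation; once that is in hand, the passage from local to global solutions is routine.
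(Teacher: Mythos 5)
Your proposal is correct and follows essentially the same route as the paper: a contraction argument for $\Phi$ on Duhamel's formula using Proposition \ref{prop:STR} and Lemma \ref{lem:Nlest} with the pair $(\gamma,\rho)=\bigl(\tfrac{4(\sigma+1)}{(d-k)\sigma},2(\sigma+1)\bigr)$, propagation to all admissible pairs, recovery of the $L^2$ conservation law by smooth approximation, and iteration to a global solution since the local time depends only on $\|v_0\|_{L^2}$. The only cosmetic difference is that you contract in the Strichartz norm alone and recover $L^\infty_tL^2$ afterwards, whereas the paper works directly in the intersection space $L^\infty_tL^2\cap L^\gamma_tL^\rho_xH^{-\delta(\rho)}_y$; both are standard and equivalent here.
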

By identifying $v=P_\eps^{1/2} u$, this directly yields a global-in-time solution $u\in C(\R;L^{2}(\R_{x}^{d-k}; H^{1}(\R_{y}^k)))$ to \eqref{NLS} 
and thus proves Theorem \ref{thm:sub}. Note that here continuous dependence on the initial data precisely 
means that for $T>0$ the map $v_0\mapsto v{\vert}_{[-T,T]}$ is continuous 
as a map
\[
L^2(\R^d) \to C([-T,T], L^2(\R^d))\cap L^q([-T,T]; L^r(\R^{d-k}_x; H^{-\delta(r)} (\R^k_y))).
\]

\begin{proof}
We shall prove Proposition \ref{prop:GWPL2} in several steps.

Step 1 (Existence):
Fix the admissible pair $(\gamma,\rho)= \Big(\frac{4(\sigma+1)}{(d-k)\sigma},2(\sigma+1)\Big)$. Let $M,T>0$ to be determined later and denote $I=[0,T]$, and set 
\begin{align*}
X_{T,M} = \{ v \in & L_{t}^{\infty}L^{2}(I) \cap L_{t}^{q}L_{x}^{r}H^{-\delta(r)}_{y}(I): \|v \|_{L_{t}^{\infty}L^{2}} + \|v \|_{L^{\gamma}_{t}L^{\rho}_{x}H^{-\delta(\rho)}_{y}} \le M \}.
\end{align*}
We note that $X_{T,M}$ is a complete metric space equipped with the distance 
$$
d(v,w) = \|v-w\|_{L_{t}^{\infty}L^{2}_{x,y}} + \|v-w\|_{L^{\gamma}_{t}L^{\rho}_{x}H^{-\delta(\rho)}_{y}}.
$$
Let $v \in X_{T,M}$. Then the Strichartz estimates obtained in Proposition \ref{prop:STR} together with Lemma \ref{lem:Nlest} imply that 
\begin{align*}
\| \Phi(v) \|_{L^{\gamma}_{t}L^{\rho}_{x}H^{-\delta(\rho)}_{y} } &\le\| S_{\eps}(\cdot)v_{0} \|_{L^{\gamma}_{t}L^{\rho}_{x}H^{-\delta(\rho)}_{y}} + \|\mathcal{N}(v)\|_{L^{\gamma}_{t}L^{\rho}_{x}H^{-\delta(\rho)}_{y}} \\
&\le C_{\sigma,\eps}\Big(\| v_{0} \|_{L^{2}} +  T^{1-\frac{(d-k)\sigma}{2}}M^{2\sigma+1} \Big),
\end{align*}
as well as
\begin{align*}
\| \Phi(v) \|_{L^{\infty}_{t}L^{2} } &\le \| v_{0} \|_{L^{2}} + C_{2}\|P_{\eps}^{-1/2}g(P_{\eps}^{-1/2}v)\|_{L^{\gamma'}_{t}L^{\rho'}_{x}H^{\delta(\rho)}_{y}} \\
&\le C_{\sigma,\eps}\Big(\| v_{0} \|_{L^{2}} +  T^{1-\frac{(d-k)\sigma}{2}}M^{2\sigma+1} \Big).
\end{align*}
Together, these yield
$$
\|\Phi(v) \|_{L_{t}^{\infty}L^{2}} + \|\Phi(v) \|_{L^{\gamma}_{t}L^{\rho}_{x}H^{-\delta(\rho)}_{y}}  \le 2C_{\sigma,\eps}\Big( \| v_{0} \|_{L^{2}} +  T^{1-\frac{(d-k)\sigma}{2}}M^{2\sigma+1}\Big). 
$$
We now choose $M$ such that 
\begin{equation*}
3M = 8C_{\sigma,\eps}\|v_{0}\|_{L^{2}}
\end{equation*}
and choose $T>0$ such that
\begin{equation}\label{eq:T}
2C_{\sigma,\eps}T^{1-\frac{(d-k)\sigma}{2}}M^{2\sigma+1} \le \frac{M}{4}.
\end{equation}
Then it follows that $\Phi(v) \in X_{T,M}$ for all $v \in X_{T,M}$ so that $\Phi(X_{T,M}) \subset X_{T,M}$. 
Now, let $v,w \in X_{T,M} $. Then by Lemma \eqref{lem:Nlest} and using \eqref{eq:T} we have
\begin{align}\label{StrNL}
\| \mathcal{N}(v) - \mathcal{N}(w) \|_{L_{t}^{\gamma}L_{x}^{\rho}H_{y}^{-\delta(\rho)} } &\le 2C_{\sigma,\eps}M^{2\sigma}T^{1-\frac{(d-k)\sigma}{2}}\|v-w\|_{L^\gamma_tL^\rho_xH^{-\delta(\rho)}_y}  \\
& \le \frac{1}{4}\|v-w\|_{L^\gamma_tL^\rho_xH^{-\delta(\rho)}_y}, \nonumber
\end{align}
which together with the same estimate for the $L_{t}^{\infty}H^{1}$-norm gives 
$$ 
d(\Phi(v),\Phi(w)) \le \frac{1}{2}d(v,w), \quad \forall v,w \in X_{T,M}.
$$
Thus $\Phi$ is a contraction map on $X_{T,M}$ and Banach's fixed point theorem yields the existence of a unique fixed point $v \in X_{T,M}$. Furthermore, since the solution $v$ satisfies the integral equation \eqref{intPNLS}, we infer continuity in time, i.e., $v\in C(I;L^{2}(\R^{d}))$.

Moreover, if $v \in X_{T,M}$, then $v \in L_{t}^{q}L_{x}^{r}H^{-\delta(r)}_{y}(I)$ for any admissible pair $(q,r)$, since by our Strichartz estimates
$$
\|v\|_{L^{q}_{t}L^{r}_{x}H^{-\delta(r)}_{y}   } \equiv \|\Phi(v)\|_{L^{q}_{t}L^{r}_{x}H^{-\delta(r)}_{y}   } \le C_{1}\|v_0\|_{L^2} + C_{2}\|P_{\eps}^{-1/2}g(P_{\eps}^{-1/2}v)\|_{L^{\gamma'}_{t}L^{\rho'}_{x}H^{\delta(\rho)}_{y}   },
$$
which is estimated as in the proof of Lemma \ref{lem:Nlest}.

Step 2 (Uniqueness):  
Let $I =[0,T]$ and $v,w \in C(I;L^{2})\cap L_{t}^{q}L_{x}^{r}H^{-\delta(r)}_{y}(I)$ be two solutions to 
\eqref{intPNLS} with $\varphi = v_{0} =w_{0}$. Then as in Step 1, we have  $v, w\in X_{T, M}$ with 
$3M=8C_{\sigma, \eps}\|\varphi\|_{L^2}$ and $T$ given by \eqref{eq:T}. Since the difference of $v$ and $w$ is given by
\begin{equation*}
(v-w)(t)=\mathcal N(v)(t)-\mathcal N(w)(t),
\end{equation*}
then we can apply \eqref{StrNL} from Step 1 on the interval $I$ to obtain
\begin{equation*}
\|v-w\|_{L^\gamma_tL^\rho_xH^{-\delta(\rho)}_y(I)}\leq \frac{1}{4}\|v-w\|_{L^\gamma_tL^\rho_xH^{-\delta(\rho)}_y(I)}.
\end{equation*}
From this we conclude (local) uniqueness
\begin{equation*}
\|v-w\|_{L^\gamma_tL^\rho_xH^{-\delta(\rho)}_y(I)}=0,
\end{equation*}
i.e., $v=w$ on $I=[0,T]$. 

In addition, the solution depends continuously on the initial data, as can be seen by taking two solutions $v,\tilde{v}$ on a common time interval $I_{c}= \min \{ I, \tilde I \}$. Then by what was done above, we have that $v,\tilde{v} \in X_{T,M}$ with $3M=8 \max\{\|v_{0}\|_{L^{2}},\|\tilde{v}_{0}\|_{L^{2}}  \}$ and $T = |I_{c}|$ satisfying \eqref{eq:T} so that
$$
d(v,\tilde{v}) \le \|v_{0}-\tilde{v}_{0}\|_{L^{2}} + \frac{1}{2} d(v,\tilde{v}),
$$
which proves the continuous dependence on the initial data, after extending the argument to the interval $I_{c}$.

Step 3 (Global existence):
In order to show that the solution obtained in Step 1 indeed exists for all times $t\in \R$, let
$$
T_{\rm \max} = \sup\{T>0 :  \text{there exists a solution $v(t, \cdot)$ on $[0,T)$} \}.
$$
We claim that 
$$
\text{if} \quad T_{\rm{max}}<+\infty, \quad \text{then} \quad \lim_{t \to T_{\rm{max}} } \| v (t) \|_{L^{2}} = +\infty. 
$$
Suppose, by contradiction, that $T_{\rm \max}<\infty$ and that there exists a sequence $ t_{j} \to T_{\rm \max}$ such that $\|v(t_{j})\|_{L^{2}} \le M$. 
Now choose some integer $J$ such that $t_{J}$ is close to $T_{\rm \max}$ where by assumption $\|v(t_{J})\|_{L^{2}} \le M$.  But by Step 1, using the initial data $v(t_{J})$ we can 
extend our solution to the interval $[t_{J}, t_{J}  +T]$ where we now choose $t_{J}$ such that
$$
t_{J}  +T > T_{\rm max}.
$$
This gives a contradiction to the definition of $T_{\rm max}$.

Next, we shall prove that the $L^2$-norm of $v$ is conserved along the time-evolution. To this end, we adapt an elegant argument given in \cite{Oz}, which has the 
advantage that it does not require an approximation procedure using a sequence of sufficiently smooth solutions (as is classically done, see, e.g., \cite{Cz}). 
First note that by Step 1 we have $v \in C([0,T]; L^{2}(\R^{d}))$ for any $T<T_{\rm max}$. We then 
rewrite Duhamel's formula \eqref{intPNLS}, using the continuity of the semigroup $S_\eps$ to propagate backward in time
\begin{equation}\label{eq:N}
S_\eps(-t)v(t) =  v_{0} + S_\eps(-t)\mathcal{N}(v)(t).
\end{equation}
The fact that $S_{\eps}(\cdot)$ is unitary in $L^{2}$ implies $\|v(t)\|_{L^{2}} = \|S_\eps(-t)v(t)\|_{L^{2}}$. The latter can be expressed using 
the above identity to obtain
\begin{align*}
\|v(t)\|^{2}_{L^{2}} &\, = \|v_{0}\|^{2}_{L^{2}} +2\re \, \big \langle S_\eps(-t)\mathcal{N}(v)(t),v_{0} \big\rangle_{L^{2}} + \| S_\eps(-t)\mathcal{N}(v)(t) \|^2_{L^{2}}\\
& \, =: \|v_{0}\|^{2}_{L^{2}}  + \mathcal{I}_{1} + \mathcal{I}_{2}. 
\end{align*}
We want to show that $\mathcal{I}_{1} + \mathcal{I}_{2}=0$.
In view of \eqref{nlterm} we can rewrite
\begin{align*}
\mathcal{I}_{1} &= -2\im \, \big\langle \int_{0}^{t} S_{\eps}(-s)P_{\eps}^{-1/2}g(P_{\eps}^{-1/2}v)(s)\;ds , v_0  \big\rangle_{L^2} \\
&=  -2\im  \int_{0}^{t} \big\langle  P_{\eps}^{-1/2}g(P_{\eps}^{-1/2}v)(s) , S_{\eps}(s)v_0  \big\rangle_{L^2} \;ds.
\end{align*}
By duality in $y$ and H\"older's inequality in both $x$ and $t$ we find that this quantity is indeed finite
\begin{align*}
|\mathcal{I}_{1}| \le 2\|  P_{\eps}^{-1/2}g(P_{\eps}^{-1/2}v)\|_{L^{\gamma'}_{t}L_{x}^{\rho'}H_{y}^{\delta(\rho)}}\| S_{\eps}(s)v_0 \|_{L^{\gamma}_{t}L_{x}^{\rho}H_{y}^{-\delta(\rho)}} <\infty.
\end{align*}
Denoting for simplicity $G_{\eps}(\cdot) = P_{\eps}^{-1/2}g(P_{\eps}^{-1/2}v)(\cdot)$, we perform the following computation:
\begin{align*}
\mathcal{I}_{2}& \equiv \big\langle \int_{0}^{t} S_{\eps}(-s)G_{\eps}(s) \;ds, \int_{0}^{t} S_{\eps}(-s')G_{\eps}(s') \;ds' \big\rangle_{L^{2}} \\
&=\int_{0}^{t} \big\langle  S_{\eps}(-s)G_{\eps}(s) , \Big(\int_{0}^{s} + \int_{s}^{t} \Big) S_{\eps}(-s')G_{\eps}(s') \;ds' \big\rangle_{L^{2}}\;ds \\
&= \int_{0}^{t} \big\langle  G_{\eps}(s) , \int_{0}^{s} S_{\eps}(s-s')G_{\eps}(s') \;ds' \big\rangle_{L^{2}}\;ds  \\
&\quad \quad \quad \quad \quad + \int_{0}^{t}\int_{s}^{t}  \big\langle  S_{\eps}(s'-s)G_{\eps}(s) ,   G_{\eps}(s')  \big\rangle_{L^{2}}\;ds'\;ds \\
&=\int_{0}^{t} \big\langle  G_{\eps}(s) , -i\mathcal{N}(v)(s) \big\rangle_{L^{2}}\;ds + \int_{0}^{t}  \big\langle  \int_{0}^{s'} S_{\eps}(s'-s)G_{\eps}(s)\;ds,   G_{\eps}(s')  \big\rangle_{L^{2}}\;ds'\\
&= 2 \re \int_{0}^{t} \big\langle  G_{\eps}(s) , -i\mathcal{N}(v)(s) \big\rangle_{L^{2}}\;ds.
\end{align*}
Using the integral formulation \eqref{eq:N}, we can express $-i\mathcal{N}(v)(s)$ and write
\begin{equation}\label{yeta}
\mathcal{I}_{2} = 2 \re \, \Big( \int_{0}^{t} \big\langle  G_{\eps}(s) , iS_{\eps}(s)v_0 \big\rangle_{L^{2}}\;ds + \int_{0}^{t} {\big\langle  G_{\eps}(s) , -iv(s) \big\rangle_{L^{2}}}\;ds \Big).
\end{equation}
Here we note that the particular form of our nonlinearity implies
$$
\re\,\big\langle G_{\eps}(\cdot),  -iv(\cdot) \big\rangle_{L^{2}} = \im \, \big\langle g(P_{\eps}^{-1/2}v)(\cdot),  P_{\eps}^{-1/2}v(\cdot) \big\rangle_{L^{2}}
= \im \, \|P_{\eps}^{-1/2}v(\cdot)\|^{2\sigma +2}_{L^{2\sigma +2}} =0,  
$$
and thus the second term on the right-hand side of \eqref{yeta} simply vanishes. In summary, we find
\begin{align*}
\mathcal{I}_{2} = 2 \re \, \int_{0}^{t} \big\langle  G_{\eps}(s) , iS_{\eps}(s)v_0 \big\rangle_{L^{2}}\;ds 
= 2 \im \, \int_{0}^{t} \big\langle  S_{\eps}(-s)G_{\eps}(s) \;ds , v_0 \big\rangle_{L^{2}} \equiv -\mathcal{I}_{1},
\end{align*}
which proves that 
$$
\|v(t)\|_{L^{2}} = \|v_{0}\|_{L^{2}} \quad \forall t \in [0,T].
$$
This conservation law allows us to reapply Step 1 as many times as we wish, thereby preserving the length of the maximal interval in each iteration, 
and yielding $T_{\rm max} = +\infty$. Since the equation is time-reversible modulo complex conjugation, this yields a global solution for all $t\in \R$.
\end{proof}

\subsection{Higher order regularity}
In this subsection, we are going to prove that the global-in-time $L^2$-solution obtained in Proposition \ref{prop:GWPL2} 
enjoys persistence of regularity. Namely, if the initial datum $v_{0}\in H^1$, then the corresponding solution $v(t,\cdot)$ remains in $H^1$ for all times $t\in \R$. 
We will prove this property by exploiting the Strichartz estimates stated in Proposition \ref{prop:STR} and the global well-posedness result in $L^2$. 
Similar arguments can be used to obtain a solution $v(t, \cdot)\in H^s$,  $s\ge 1$, provided the nonlinearity is sufficiently smooth.

\begin{proposition}
Let $v \in C(\R_t,L^{2}(\R^{d}))\cap L^{q}_{\rm loc}(\R_t;L^{r}(\R_x^{d-k}; H^{-\delta(r)}(\R_{y}^{k})))$ be the solution obtained in Proposition \ref{prop:GWPL2} 
with initial data $v_0\in L^2(\R^d)$. If, in addition, 
$v_0 \in H^1(\R^d)$, then $v \in C(\R_t; H^1(\R^{d}))$.
\end{proposition}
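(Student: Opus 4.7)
The plan is to run the Duhamel / Strichartz fixed-point argument of Proposition \ref{prop:GWPL2} one more time, now at the level of the gradient, and exploit the fact that the time-step of the $L^2$ theory depends only on $\|v_0\|_{L^2}$, which is conserved.

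First I would apply the Strichartz estimates \eqref{S1}--\eqref{S2} of Proposition \ref{prop:STR} to $\nabla v$, where $v$ solves \eqref{intPNLS}. Since $\nabla$ commutes with both $S_\eps(t)$ and $P_\eps^{\pm 1/2}$, I obtain, for the admissible pair $(\gamma,\rho)=\bigl(\tfrac{4(\sigma+1)}{(d-k)\sigma},2(\sigma+1)\bigr)$ and any other admissible $(q,r)$,
\begin{equation*}
\|\nabla v\|_{L^\infty_t L^2\,\cap\, L^q_tL^r_xH^{-\delta(r)}_y([0,T])}\;\le\;C_1\|\nabla v_0\|_{L^2}+\|\nabla\mathcal N(v)\|_{L^{\gamma'}_tL^{\rho'}_xH^{\delta(\rho)}_y([0,T])}.
\end{equation*}
The key nonlinear estimate \eqref{eq:NlGrad} then gives
\begin{equation*}
\|\nabla v\|_{L^\infty_t L^2\,\cap\, L^\gamma_tL^\rho_xH^{-\delta(\rho)}_y([0,T])}\;\le\;C_1\|\nabla v_0\|_{L^2}+C\,\eps^{-2(\sigma+1)}T^{\alpha}\|v\|_{L^\gamma_tL^\rho_xH^{-\delta(\rho)}_y}^{2\sigma}\|\nabla v\|_{L^\gamma_tL^\rho_xH^{-\delta(\rho)}_y},
\end{equation*}
with $\alpha=1-\tfrac{(d-k)\sigma}{2}>0$.

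Next I would choose the time-step $T>0$ so small that, on $[0,T]$, the $L^2$ solution satisfies $\|v\|_{L^\gamma_tL^\rho_xH^{-\delta(\rho)}_y}\le M=4C_{\sigma,\eps}\|v_0\|_{L^2}$ (this is exactly the size given by Step~1 of Proposition~\ref{prop:GWPL2}) and moreover
\begin{equation*}
C\,\eps^{-2(\sigma+1)}T^{\alpha}M^{2\sigma}\le \tfrac12.
\end{equation*}
Then the nonlinear term in the display above can be absorbed into the left-hand side, yielding the a priori bound $\|\nabla v\|_{L^\infty_tL^2([0,T])}\le 2C_1\|\nabla v_0\|_{L^2}$. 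This gives $v(t)\in H^1$ for all $t\in[0,T]$, and continuity in $H^1$ on $[0,T]$ follows by applying the same Strichartz bounds to $\nabla(v(t)-v(s))$ via the Duhamel formula, together with standard continuity-in-time arguments for the free propagator $S_\eps(t)$ on $L^2$.

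Finally, I would iterate this one-step argument using the conservation law $\|v(t)\|_{L^2}=\|v_0\|_{L^2}$. The crucial point is that the length $T$ depends on $v$ only through $\|v_0\|_{L^2}$ and $\eps,\sigma$, so the same time-step works starting from any $t_j=jT$. On each interval $[t_j,t_{j+1}]$ the bound becomes $\|\nabla v(t)\|_{L^2}\le 2C_1\|\nabla v(t_j)\|_{L^2}$, so after $N$ iterations $\|\nabla v\|_{L^\infty_t L^2([0,NT])}\le (2C_1)^N\|\nabla v_0\|_{L^2}$. This is finite on every bounded interval, which rules out any $H^1$ blow-up in finite time and, together with the time-reversibility noted in the paper, yields $v\in C(\R_t;H^1(\R^d))$. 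The main technical obstacle is the last one: checking that the $L^2$-solution really does obey $\|v\|_{L^\gamma_tL^\rho_xH^{-\delta(\rho)}_y([t_j,t_{j+1}])}\le M$ uniformly in $j$, which however is immediate because the fixed-point argument of Proposition~\ref{prop:GWPL2} depends only on $\|v(t_j)\|_{L^2}=\|v_0\|_{L^2}$.
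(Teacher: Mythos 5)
Your proposal is correct and follows essentially the same route as the paper: both rest on the gradient estimate \eqref{eq:NlGrad}, absorb the nonlinear term into the left-hand side of the Strichartz bound via a smallness condition, and then iterate using the $L^2$-conservation law, accepting a constant that grows geometrically in the number of steps. The only (inessential) difference is how smallness is arranged — you shrink the time-step $T$ using the factor $T^{\alpha}$ with the Strichartz norm of $v$ merely bounded by $M$, whereas the paper subdivides a fixed $[0,T]$ into pieces on which $\|v\|_{L^\gamma_tL^\rho_xH^{-\delta(\rho)}_y}\le\eta$ is small.
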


\begin{proof}
Let us fix a $0<T<\infty$. We are going to show that
\begin{equation}\label{estgrad}
\|\nabla v\|_{L^\infty_tL^2([0, T])}\le K(T,\|\nabla v_0\|_{L^2}).
\end{equation}
Having in mind the conservation property of the $L^2$-norm of $v$, this estimate is sufficient to conclude the desired result. 

To obtain \eqref{estgrad}, we first recall from Proposition \ref{prop:GWPL2} that
\begin{equation*}
\|v\|_{L^\gamma_tL^\rho_xH^{-\delta(\rho)}_y([0, T])}\le C(T,\| v_0\|_{L^2})=:C_T,
\end{equation*}
where $(\gamma, \rho) = \Big(\frac{4(\sigma+1)}{(d-k)\sigma},2(\sigma+1)\Big)$ is the admissible pair used in Lemma \ref{lem:Nlest}. Let $\lambda>0$ 
be a small parameter to be chosen later on. We then divide $[0, T]$ into $N=N(\lambda, C_T)$ subintervals, i.e., $[0, T]=\cup_{j=1}^NI_j$, where $I_j=[t_{j-1}, t_j]$ and $0=t_0<t_1<\dotsc<t_N=T$, such that 
\begin{equation}\label{eq:small1}
\| v\|_{L^\gamma_tL^\rho_xH^{-\delta(\rho)}_y(I_j)}\le\lambda,\quad j=1, \dotsc, N.
\end{equation}
First we estimate the gradient of \eqref{nlterm} by a similar strategy as in Lemma \ref{lem:Nlest} with $v'=0$.   By applying the Strichartz estimate \eqref{S2} and the appropriate embeddings in $y$ gives
\begin{align*}
\|\nabla \mathcal{N}(v)\|_{L^{\gamma}_{t}L^{\rho}_{x}H^{-\delta(\rho)}_{y}(I_{j})} \le  \eps^{-1}C_{2}\| \nabla g(P_{\eps}^{1/2}v)  \|_{L_{t}^{\gamma'}L^{\rho'}_{x}L^{\rho'}_{y}}.
\end{align*}
Since the nonlinearity is smooth, this allows us to estimate in $y$ as follows:
\begin{align*}
\| \nabla g(P_{\eps}^{1/2}v)  \|_{L^{\rho'}_{y}} &\le (2\sigma +1)\|P_{\eps}^{-1/2}v  \|^{2\sigma}_{L_{y}^{\rho}}\|P_{\eps}^{-1/2} \nabla v   \|_{L_{y}^{\rho}}\\
&\lesssim \eps^{-(2\sigma +1)}\|v  \|^{2\sigma}_{H_{y}^{-\delta(\rho)}}\| \nabla v   \|_{H_{y}^{-\delta(\rho)}}.
\end{align*}
Combining this with a H\"older estimate in $x$ and $t$, similarly as in Lemma \ref{lem:Nlest} above, we obtain  
$$
\|\nabla \mathcal{N}(v)\|_{L^{\gamma}_{t}L^{\rho}_{x}H^{-\delta(\rho)}_{y}(I_{j})} \lesssim \eps^{-2(\sigma +1)} |I_{j}|^{1-\frac{(d-k)\sigma}{2}}\lambda^{2\sigma}
\|\nabla v\|_{L^{\gamma}_{t}L^{\rho}_{x}H^{-\delta(\rho)}_{y}(I_{j})}.
$$
Hence on each subinterval $I_{j}$ we have that
\begin{align*}
\|\nabla v\|_{L_{t}^{\infty}L^{2}(I_j)} &+ \|\nabla v\|_{L_{t}^{\gamma}L^{\rho}_{x}H_{y}^{-\delta(\rho)}(I_j)} \\
&\le C_{\eps}\Big( \|\nabla v_{j-1}\|_{L^{2}} + |I_j|^{1-\frac{(d-k)\sigma}{2}}\lambda^{2\sigma}
\|\nabla v\|_{L^{\gamma}_{t}L^{\rho}_{x}H^{-\delta(\rho)}_{y}(I_j)} \Big)
\end{align*}
for $j=1,\dotsc,N$ where we write $\nabla v_{j-1}$ to denote $\nabla v(t_{j-1})$.
Now choose $\lambda=\lambda({C_{\eps}}, T)$  such that
$$
{C_{\eps}}T^{1-\frac{(d-k)\sigma}{2}}\lambda^{2\sigma} <1.
$$
Since $|I_{j}| \le T$ we infer the estimate
$$
\|\nabla v\|_{L_{t}^{\infty}L^{2}(I_j)} + \|\nabla v\|_{L_{t}^{\gamma}L^{\rho}_{x}H_{y}^{-\delta(\rho)}(I_j)} \le K^\eps_j\| \nabla v_{j-1} \|_{L^{2}},
$$
for some constant $K^\eps_{j}$ which depends on $\eps$.
In particular, for $j=1,\dotsc,N$ we have 
\begin{equation*}
\|\nabla v_{j}\|_{L^2}\le K^\eps_j\|\nabla v_{j-1}\|_{L^2}.
\end{equation*}
Using this, we iterate the argument on each subinterval $I_j$, $j=1, \dotsc, N$, to obtain the desired estimate \eqref{estgrad}.
\end{proof}

\begin{remark} Notice that we cannot obtain uniform-in-time bounds on the $H^{1}$-norm of $v$ by invoking the energy \eqref{CLE}. 
Indeed the energy functional, written in terms of $v$, reads
\begin{equation*}
E(t)=\frac{1}{2}\|P^{-1/2}_\eps\nabla v \|^{2}_{L^2} -\frac{1}{2(\sigma +1)}\| P^{-1/2}_\eps v \|^{2\sigma +2}_{L^{2\sigma +2}},
\end{equation*}
which cannot provide a uniform bound on the full gradient of $v$.
\end{remark}

The proposition above yields a solution $u$ to \eqref{NLS} 
such that $v(t, \cdot)=P^{1/2}_\eps u(t, \cdot)\in H^1(\R^{d})$ globally in time. In particular, since $$\|u(t, \cdot)\|_{H^1}\le\|P^{1/2}_\eps u(t, \cdot)\|_{H^1},$$ 
we infer $u(t,\cdot) \in H^1(\R^d)$ for all $t\in \R$, 
provided $P^{1/2}_\eps u_0\in H^1$. This shows that for a restricted class of initial data, the solution $u$ exhibits a sufficient amount of regularity 
to rule out the possibility of finite time blow-up in the usual sense.


\section{The critical case and the case of full off-axis dispersion}\label{sec:crit}

In this section, we shall treat the two ``extreme" cases and consequently prove Theorems \ref{thm:crit} and \ref{thm:full}.

\subsection{Partial off-axis dispersion with critical nonlinearity} 

In the case of partial off-axis dispersion with critical nonlinearity, i.e., $\sigma = \frac{2}{d-k}$ and $0\le k\le 2$, we see that the estimate obtained in Lemma \ref{lem:Nlest} no longer yields a positive power 
$\alpha$ of $T$. Hence the fixed point argument employed in the subcritical case breaks down. 
In order to overcome this obstacle, we shall employ the same type of arguments as in \cite{CW}. 

To this end, we first note 
that a particular admissible pair $(q, r)$ is obtained for $$q=r=\frac{2(d-k+2)}{d-k}$$ and introduce the 
following mixed space for any $I\subset \R_t$:
\begin{equation*}\label{eq:XI}
W(I)=L^{\frac{2(d-k+2)}{d-k}}\Big(I \times \R_x^{d-k};H^{-\frac{d-k}{d-k+2}}(\R_y^k)\Big).
\end{equation*}
Then, we have the following local well-posedness result for $v$, which directly yields Theorem \ref{thm:crit} for $u$ via $v= P_{\eps}^{1/2}u$. 

\begin{proposition}\label{prop:LWPcrit}
Let $d-k > 0$ with $k\le 2$, and $\sigma=\frac{2}{d-k}$. Then for any $v_0\in L^2(\R^d)$, there exist times $0<T_{\rm max}, T_{\rm min}\le\infty$ and 
a unique maximal solution $v\in C((-T_{\rm min}, T_{\rm max});L^2(\R^d))\cap W(I)$ to 
\eqref{PNLS}, where $I$ denotes any closed time interval $I\subset (-T_{\rm min}, T_{\rm max})$. 
Furthermore, $T_{\rm max}<\infty$ if and only if
\begin{equation}\label{eq:bua}
\|v\|_{W([0, T_{\rm max}))}=\infty,
\end{equation}
and analogously for $T_{\rm min}$. Finally, if $\|v_0\|_{L^2}$ is sufficiently small, then the solution is global.
\end{proposition}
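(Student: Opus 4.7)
The key observation is that in the critical case $\sigma=\frac{2}{d-k}$ the admissible pair $(\gamma,\rho)$ appearing in Lemma \ref{lem:Nlest} collapses to $(\gamma,\rho)=\bigl(\frac{2(d-k+2)}{d-k},\frac{2(d-k+2)}{d-k}\bigr)$ --- precisely the pair defining the space $W(I)$ --- while the exponent $1-\frac{(d-k)\sigma}{2}$ of $T$ in that lemma \emph{vanishes}. The nonlinear estimate therefore becomes the scale-invariant bound
\begin{equation*}
\|\mathcal N(v)-\mathcal N(w)\|_{W(I)}\lesssim\bigl(\|v\|_{W(I)}^{2\sigma}+\|w\|_{W(I)}^{2\sigma}\bigr)\|v-w\|_{W(I)}.
\end{equation*}
My plan is to argue in the spirit of Cazenave--Weissler \cite{CW}: since no smallness factor in $T$ is available, the smallness needed to close a fixed point argument has to be extracted from the linear evolution instead, via absolute continuity of the $W$-norm.

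Concretely, I would fix $v_0\in L^2(\R^d)$, apply the Strichartz estimate \eqref{S1} on $\R$ to obtain $S_\eps(\cdot)v_0\in W(\R)$, and then exploit the fact that
\begin{equation*}
\|S_\eps(\cdot)v_0\|_{W([-T,T])}\longrightarrow 0\quad\text{as }T\to 0,
\end{equation*}
which follows from absolute continuity of the Lebesgue integral applied to the $L^q_t$-function $s\mapsto\|S_\eps(s)v_0\|_{L^r_xH^{-\delta(r)}_y}$. Hence for every small $\delta>0$ there exists $T=T(v_0,\delta)>0$ with $\|S_\eps(\cdot)v_0\|_{W([-T,T])}\le \delta$. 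Then I would set up the complete metric space
\begin{equation*}
X_{T,M}=\bigl\{v\in C([-T,T];L^2)\cap W([-T,T])\ :\ \|v\|_{L^\infty_tL^2}+\|v\|_{W([-T,T])}\le M\bigr\},
\end{equation*}
with distance $d(v,w)=\|v-w\|_{L^\infty_tL^2}+\|v-w\|_{W([-T,T])}$, and verify that the Duhamel map $\Phi$ in \eqref{intPNLS} is a self-map of $X_{T,M}$ and a contraction. Using Proposition \ref{prop:STR} (with both the critical pair and the pair $(\infty,2)$) together with the scale-invariant nonlinear estimate above, one obtains $\|\Phi(v)\|_{L^\infty_tL^2}+\|\Phi(v)\|_{W([-T,T])}\le C\|v_0\|_{L^2}+C\delta+C_{\eps,\sigma}M^{2\sigma+1}$ and analogously $d(\Phi(v),\Phi(w))\le C_{\eps,\sigma}M^{2\sigma}d(v,w)$. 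Choosing $M=2C\|v_0\|_{L^2}$ first, and then $T$ small enough that both $\delta$ and $C_{\eps,\sigma}M^{2\sigma}$ are below $\tfrac14$, yields a unique fixed point on $[-T,T]$; running the same argument backwards in time gives a solution on $[-T,0]$.

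The maximal interval $(-T_{\min},T_{\max})$ then arises by standard patching, and uniqueness on any compact subinterval follows from the same difference estimate applied on shorter subintervals if necessary. For the blow-up alternative, I would argue by contradiction: if $T_{\max}<\infty$ but $\|v\|_{W(0,T_{\max})}<\infty$, absolute continuity again lets us partition $[0,T_{\max})$ into finitely many subintervals on which $\|v\|_W$ is as small as we wish; applying the Strichartz estimate to the Duhamel formula starting from some $t_0$ very close to $T_{\max}$ with datum $v(t_0)$ then produces a local existence time $\tau>0$ bounded below \emph{independently} of $t_0$, extending the solution strictly past $T_{\max}$. Finally, for the small data statement, a single application of \eqref{S1} on $\R$ yields $\|S_\eps(\cdot)v_0\|_{W(\R)}\le C\|v_0\|_{L^2}$, which is arbitrarily small once $\|v_0\|_{L^2}$ is, so the same contraction can be performed directly on $I=\R$, giving a global solution. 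The main conceptual obstacle throughout is the replacement of ``smallness of $T$'' by ``smallness of $\|S_\eps(\cdot)v_0\|_{W(I)}$'': this standard but delicate feature of critical NLS theory is exactly what forces the blow-up criterion to be stated in terms of the $W$-norm rather than the $L^2$-norm of $v$.
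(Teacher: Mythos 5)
Your overall strategy is the same as the paper's (a Cazenave--Weissler-type argument: extract smallness from $\|S_\eps(\cdot)v_0\|_{W([0,T])}\to 0$ as $T\to 0$ rather than from a positive power of $T$, then run a fixed point, and prove the blow-up alternative and small-data global existence exactly as you describe). However, there is a genuine gap in your contraction step. You place the fixed point in
\begin{equation*}
X_{T,M}=\bigl\{v:\ \|v\|_{L^\infty_tL^2}+\|v\|_{W([-T,T])}\le M\bigr\},\qquad M=2C\|v_0\|_{L^2},
\end{equation*}
and then ask to choose ``$T$ small enough that $C_{\eps,\sigma}M^{2\sigma}<\tfrac14$.'' But $M$ is fixed once $v_0$ is fixed; it does not shrink with $T$. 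Worse, any candidate solution satisfies $\|v\|_{L^\infty_tL^2}\ge\|v(0)\|_{L^2}=\|v_0\|_{L^2}$, so $M$ cannot be taken smaller than $\|v_0\|_{L^2}$ no matter what. Hence the condition $C_{\eps,\sigma}M^{2\sigma}<\tfrac14$ is a smallness condition on the \emph{data}, and your argument as written only proves the small-data case, not local existence for arbitrary $v_0\in L^2$.

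The repair is to decouple the two norms, which is what the paper does: the fixed point is performed in the ball
\begin{equation*}
Z_{T,\delta}=\bigl\{v\in W([0,T]):\ \|v\|_{W([0,T])}<2\delta\bigr\},
\end{equation*}
where only the $W$-norm is required to be small. Since the nonlinear estimate of Lemma \ref{lem:Nlest} controls $\|\mathcal N(v)-\mathcal N(w)\|_{W}$ by $\bigl(\|v\|_{W}^{2\sigma}+\|w\|_{W}^{2\sigma}\bigr)\|v-w\|_{W}$ --- i.e.\ by the $W$-norms only, not by $\|v\|_{L^\infty_tL^2}$ --- the contraction constant is $\lesssim\delta^{2\sigma}$, which \emph{is} made small by shrinking $T$ via your absolute-continuity observation $\|S_\eps(\cdot)v_0\|_{W([0,T])}<\delta$. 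The bound $v\in L^\infty_tL^2$ (and membership in every other admissible Strichartz space) is then recovered \emph{a posteriori} from the Duhamel formula, rather than being imposed as a small radius in the fixed-point space. With this modification the rest of your argument --- iteration to a maximal interval, the contradiction argument for the blow-up alternative via $S_\eps(s)v(t)=v(t+s)-\mathcal N(v(t+\cdot))(s)$, and global existence for small data from $\|S_\eps(\cdot)v_0\|_{W(\R)}\lesssim\|v_0\|_{L^2}$ --- goes through as in the paper. (One minor caveat: in the blow-up step, avoid claiming a lower bound on the existence time ``independent of $t_0$''; in the critical setting no such bound in terms of $\|v(t_0)\|_{L^2}$ exists, and the correct statement is only that $\|S_\eps(\cdot)v(t_0)\|_{W(0,\tau)}<\delta$ for some $\tau>T_{\max}-t_0$ when $t_0$ is close to $T_{\max}$.)
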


Note that here the maximal existence time depends not only on the size of the initial datum but rather on the whole profile of the solution, or more precisely on the $W(I)$-norm of $v$.
\begin{proof}
We shall only give a sketch of the proof for $t\ge 0$, since our arguments follow along the same lines as those in \cite[Section 3]{CW}; see also \cite[Chapter 4.7]{Cz}.

Firstly, given a $T>0$, we claim that by choosing $\delta>0$ sufficiently small and such that 
\begin{equation}\label{eq:small}
\|S_\eps(\cdot)v_0\|_{W([0, T])}<\delta,
\end{equation}
we obtain a unique solution $v\in C([0, T];L^2(\R^d))\cap W([0, T])$ to \eqref{PNLS}. 
Indeed, under assumption \eqref{eq:small}, the operator $v\mapsto \Phi(v)$, defined by \eqref{intPNLS} with $\sigma=\frac{2}{d-k}$, admits a unique fixed point in 
\begin{equation*}
Z_{T, \delta}=\{v\in W([0, T]) \;\textrm{s.t.}\;\|v\|_{W([0, T])}<2\delta\}.
\end{equation*}
As in Proposition \ref{prop:GWPL2}, by means of the Strichartz estimates one can then show that 
$v\in L^q_tL^r_xH^{-\delta(r)}_y(0, T)$ for every admissible pair $(q,r)$. Moreover, since the solution $v$ satisfies the integral equation 
\eqref{intPNLS}, we also infer $v\in C([0, T];L^2(\R^d))$.

To see that $\Phi(v)$ has a fixed point, we use \eqref{StrNL} with $\gamma=\rho=\frac{2(d-k+2)}{d-k}$ and \eqref{eq:small}, to obtain 
\[
\|\Phi(v)\|_{W([0, T])}\le\delta
+C_{\eps}\|v\|_{W([0, T])}^{\frac{4+d-k}{d-k}}.
\]
Since $\frac{4+d-k}{d-k}>1$, choosing $\delta$ small enough guarantees that $\Phi: Z_{T, \delta}\to Z_{T, \delta}$. 
Next, Lemma \ref{lem:Nlest} implies the estimate
\begin{align}\label{eq:critdiff}
\|\Phi(v)-\Phi(w)\|_{W([0, T])} &\le C_{\eps}\left(\|v\|_{W([0, T])}^{\frac{4}{d-k}}+\|w\|_{W([0, T])}^{\frac{4}{d-k}}\right)
\|v-w\|_{W([0, T])},
\end{align}
where $C_{\eps}$ is independent of $T$.
Here, the fact that $\frac{4}{d-k}>0$ and $\delta>0$ is sufficiently small (independent of $v_{0}$ and $T$) 
implies that $v\mapsto \Phi(v)$ is a contraction on $Z_{T,\delta}$. That this choice of $\delta>0$ is always possible follows from our Strichartz estimate and from
\begin{equation}\label{estimate}
\|S_\eps(t)v_0\|_{W([0, T])}\xrightarrow{T \to 0} 0. 
\end{equation}
Consequently, for $T>0$ small enough, 
assumption \eqref{eq:small} is satisfied, yielding a unique local-in-time solution $v(t,\cdot)$ for $t\in[0,T]$.
 
By a similar argument as in Proposition \ref{prop:GWPL2} (see also \cite{Cz, CW}), one can prove uniqueness by letting $v=\Phi(v),w=\Phi(w) \in W([0, T])$ and having in mind that 
$$
\left(\|v\|_{W([0, T])}^{\frac{4}{d-k}}+\|w\|_{W([0, T])}^{\frac{4}{d-k}}\right)  \xrightarrow{T \to 0} 0.
$$
From \eqref{eq:critdiff}, we thus conclude that $v=w$ for $T>0$ sufficiently small. 
We can then iterate this argument to find a maximal existence time $0<T_{\rm max}\le \infty$ for which the unique solution exists for every admissible pair $(q, r)$. 

Next, we shall prove the blow-up alternative \eqref{eq:bua} by contradiction. Namely, 
let $T_{\rm max}<\infty$ and let us assume that $\|v\|_{W([0, T_{\rm max}))}<\infty$. Let $t\in[0, T_{\rm max})$, then for any $s\in[0, T_{\rm max}-t)$ we write in view of \eqref{intPNLS} that
\begin{equation*}
S_\eps(s)v(t)=v(t+s)-\mathcal N(v(t+\cdot))(s).
\end{equation*}
Applying again Lemma \ref{lem:Nlest} we can estimate
\begin{equation*}
\|S_\eps(\cdot)v(t)\|_{W([0, T_{max}-t))}\le\|v\|_{W([t, T_{max}))}+C_{\eps}\|v\|_{W([t, T_{max}))}^{\frac{4+d-k}{d-k}}
\end{equation*}
and thus, for $t$ sufficiently close to $T_{\rm max}$, we have
\begin{equation*}
\|S_\eps(\cdot)v(t)\|_{W([0, T_{\rm max}-t))}<\delta.
\end{equation*}
This implies we can extend the solution after the time $T_{\rm max}$, contradicting its maximality. 

Finally, in order to conclude global existence of small solutions, we note that, by a global-in-time Strichartz estimate, 
\[
\|S_\eps(\cdot)v_0\|_{W(\R)}\le C_{1}\|v_0\|_{L^2}.
\] 
This implies that if $\|v_0\|_{L^2}$ is small enough depending on $\delta>0$,  
we have $$\|S_\eps(\cdot)v_0\|_{W(\R)}<\delta.$$ 
Hence, assumption \eqref{eq:small} is satisfied for all $T\in \R$ and the same continuity argument as before allows one to repeat the contraction argument with $T=\pm \infty$, cf. \cite[Remark 4.7.5]{Cz}. 
In summary, this yields a unique global solution $v(t, \cdot)\in L^2(\R^d)$ for sufficiently small initial data.
\end{proof}


\subsection{The case of full off-axis dispersion}

We finally turn to the case of full off-axis dispersion, i.e., $d=k$. It is clear from our admissibility condition in Definition \ref{def:adm}, that in this case, 
we cannot expect to have any Strichartz estimates (see also \cite{Car} for more details). 
We thus have to resort to a more basic fixed point argument to prove the following result.

\begin{lemma}\label{lem:deqk}
Let $d=k\ge 1$ and $\sigma \le \frac{2}{(d-2)_{+}}$.  Then, for 
any $v_{0} \in L^{2}(\R^{d})$, there exists a unique global solution $v \in  C(\R_t,L^{2}(\R^{d}))$ to \eqref{PNLS}, depending continuously on the initial data and satisfying
\[
\| v(t, \cdot) \|^{2}_{L^{2}}  = \| v_{0} \|^{2}_{L^{2}} \quad \forall \, t\in \R.
\]
\end{lemma}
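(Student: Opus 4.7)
Since $d=k$ removes the $x$-variable entirely, Proposition \ref{prop:STR} supplies no Strichartz estimates and we are forced to run a direct Banach contraction for the Duhamel formulation
\begin{equation*}
v(t) = S_\eps(t)v_0 + i\int_0^t S_\eps(t-s)\mathcal F(v(s))\,ds, \qquad \mathcal F(v):=P_\eps^{-1/2}\bigl(g(P_\eps^{-1/2}v)\bigr),
\end{equation*}
inside the space $C([-T,T];L^2(\R^d))$. Because $S_\eps(t)=e^{itP_\eps^{-1}\Delta}$ is unitary on $L^2$, the argument reduces at once to showing that $\mathcal F$ is locally Lipschitz as a map $L^2\to L^2$, with constants depending only on $\eps$ and on the size of the argument.

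The crux of the matter is to extract one derivative of regularity from \emph{each} of the two factors $P_\eps^{-1/2}$ built into the nonlinearity, using the basic inequalities \eqref{act2}. Applied to the outer operator this gives $\|\mathcal F(v)\|_{L^2}\le \eps^{-1}\|g(P_\eps^{-1/2}v)\|_{H^{-1}}$. For $d\ge 3$, Sobolev duality $L^{2d/(d+2)}\hookrightarrow H^{-1}$ controls the right-hand side by $\|P_\eps^{-1/2}v\|_{L^r}^{2\sigma+1}$ with $r=\tfrac{2d(2\sigma+1)}{d+2}$. The assumption $\sigma\le\tfrac{2}{d-2}$ is exactly what forces $r\le\tfrac{2d}{d-2}$, so that the Sobolev embedding $H^1\hookrightarrow L^r$ combined with the inner gain $\|P_\eps^{-1/2}v\|_{H^1}\lesssim \eps^{-1}\|v\|_{L^2}$ (again from \eqref{act2}) yields
\begin{equation*}
\|\mathcal F(v)\|_{L^2}\lesssim \eps^{-(2\sigma+2)}\|v\|_{L^2}^{2\sigma+1}.
\end{equation*}
The cases $d=1,2$ are easier using the unrestricted embeddings $H^1\hookrightarrow L^\infty$ or $H^1\hookrightarrow L^p$ for every $p<\infty$, so that the argument works for any $\sigma>0$. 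The local Lipschitz version is obtained identically from $|g(z)-g(w)|\le C(|z|^{2\sigma}+|w|^{2\sigma})|z-w|$ together with H\"older's inequality.

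With this estimate in hand, a standard contraction argument on the ball $\{v\in C([-T,T];L^2):\,\|v\|_{L^\infty_tL^2}\le 2\|v_0\|_{L^2}\}$ for $T=T(\|v_0\|_{L^2},\eps)$ small enough yields a unique local solution depending continuously on the datum. Conservation of the $L^2$-norm is then recovered by approximating $v_0$ by smooth data $v_0^{(n)}$ and performing on the corresponding regular solutions the formal computation
\begin{equation*}
\frac{d}{dt}\|v^{(n)}\|_{L^2}^2 = 2\im\langle v^{(n)},P_\eps^{-1}\Delta v^{(n)}\rangle+2\im\langle P_\eps^{-1/2}v^{(n)},g(P_\eps^{-1/2}v^{(n)})\rangle = 0,
\end{equation*}
both inner products being real since $P_\eps^{-1}\Delta$ is self-adjoint and $\langle P_\eps^{-1/2}v^{(n)},g(P_\eps^{-1/2}v^{(n)})\rangle=\|P_\eps^{-1/2}v^{(n)}\|_{L^{2\sigma+2}}^{2\sigma+2}$; continuous dependence then transfers the identity to the limit. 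Since the local existence time depends only on $\|v\|_{L^2}$ and $\eps$, iterating the local theory produces a global solution for all $t\in\R$, exactly as in Step 3 of the proof of Proposition \ref{prop:GWPL2}.

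The main technical hurdle is the nonlinear estimate above: in the complete absence of dispersive (Strichartz) smoothing one has to spend a full two derivatives of regularity, one from each factor $P_\eps^{-1/2}$, in order to reach the $H^1$-critical threshold $\sigma=\tfrac{2}{(d-2)_+}$. Any less careful use of \eqref{act2} (for instance, crudely bounding only one of the two operators) would cap the admissible range of $\sigma$ strictly below the stated one.
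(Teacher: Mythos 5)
Your overall strategy coincides with the paper's: since no Strichartz estimates are available for $d=k$, one runs a contraction in $C([-T,T];L^2)$ using the unitarity of $S_\eps(t)$, extracts one derivative from each of the two factors $P_\eps^{-1/2}$ via \eqref{act2}, closes the nonlinear estimate by Sobolev embedding at the $H^1$-critical threshold $\sigma\le\frac{2}{(d-2)_+}$, and then globalizes via the $L^2$-conservation law (obtained by approximation) exactly as in Step 3 of Proposition \ref{prop:GWPL2}. There is, however, one defect in your exponent bookkeeping: after placing $g(P_\eps^{-1/2}v)$ in $H^{-1}$ via $L^{2d/(d+2)}\hookrightarrow H^{-1}$, you need $H^1\hookrightarrow L^r(\R^d)$ with $r=\frac{2d(2\sigma+1)}{d+2}$, and this requires not only $r\le\frac{2d}{d-2}$ (your condition $\sigma\le\frac{2}{d-2}$) but also $r\ge 2$, i.e.\ $\sigma\ge\frac1d$; on the whole space there is no embedding of $H^1$ into $L^r$ for $r<2$, so your chain of inequalities breaks for $0<\sigma<\frac1d$ (and the analogous issue persists for $d=1,2$, where in addition $L^{2d/(d+2)}$ is not the right dual space). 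The paper avoids this by not spending the full derivative on the outer factor: it bounds $\|P_\eps^{-1/2}g(w)\|_{L^2}\le\eps^{-1}\|g(w)\|_{H^{-1}}\le\eps^{-1}\|g(w)\|_{H^{-s}}$ with $s=\frac{d\sigma}{2(\sigma+1)}\le 1$ and uses the dual pair $L^{\rho'}\hookrightarrow H^{-s}$, $H^{s}\hookrightarrow L^{\rho}$ with $\rho=2(\sigma+1)\ge 2$, which is valid for every $\sigma$ in the stated range (this is the same embedding scheme as in Lemma \ref{lem:Nlest}). With that substitution your argument is complete; the rest of your proof (the Lipschitz version of the estimate, the conservation-law computation, and the iteration to a global solution) matches the paper's.
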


\begin{proof}
To prove this result it suffices to show that $v\mapsto \Phi(v)$ is a contraction on 
$$
Y_{T,M} = \{ v \in  L^{\infty}([0,T];L^{2}(\R^{d})) : 
\|v \|_{L_{t}^{\infty}L^{2}} \le M \}.
$$
Let $v, v'\in  Y_{T,M}$, and recall that $S_\eps(t)$ is unitary on $L^2$. Using Minkowski's inequality and the scaling argument \eqref{act2} then yields
\begin{align*}
\|\mathcal{N}(v)(t) - \mathcal{N}(v')(t)\|_{L^{2}} \le \eps^{-1}\int_{0}^{t}\|g(P_{\eps}^{-1/2}v)-g(P_{\eps}^{-1/2}v')\|_{H^{-\frac{d\sigma}{2(\sigma +1)}}}(s)\;ds,
\end{align*}
provided $\frac{d\sigma}{2(\sigma +1)} \le 1$, i.e., $\sigma \le \frac{2}{(d-2)_{+}}$. 

By a similar embedding strategy as in Lemma \ref{lem:Nlest} one finds
\begin{align*}
\|g(P_{\eps}^{-1/2}v)-g(P_{\eps}^{1/2}v')\|_{H^{-\frac{d\sigma}{2(\sigma +1)}}} &\le (\|P_{\eps}^{-1/2}v\|_{L^{\rho}}^{2\sigma} + \|P_{\eps}^{-1/2}v'\|_{L^{\rho}}^{2\sigma})\| P_{\eps}^{-1/2}(v - v')  \|_{L^{\rho}}\\
&\le \eps^{-(2\sigma+1)}(\|v\|_{L^{2}}^{2\sigma} + \|v'\|_{L^{2}}^{2\sigma})\| v - v'  \|_{L^{2}},
\end{align*}
which consequently implies that
$$
\|\mathcal{N}(v) - \mathcal{N}(v')\|_{L_{t}^{\infty}L^{2}} \le \eps^{-(2\sigma+1)}T(\|v\|_{L^{\infty}_{t}L^{2}}^{2\sigma} + \|v'\|_{L^{\infty}_{t}L^{2}}^{2\sigma})\| v - v'  \|_{L^{\infty}_{t}L^{2}}.
$$
Choosing $T>0$ sufficiently small, Banach's fixed point theorem directly yields a 
local-in-time solution $v\in C([0,T],L^{2}(\R^{d}))$. The conservation property of the $L^2$-norm of $v$ can then be shown 
analogously as in the proof of Proposition \ref{prop:GWPL2}. This consequently allows us to extend the local solution $v$ for all $t\in \R$. 
\end{proof}

This directly yields Theorem \ref{thm:full} for $u$, since in the case of full-off axis dispersion $v= P_{\eps}^{1/2}u\in L^2(\R^d)$ implies $u\in H^1(\R^d)$ for any $\eps>0$. 
In addition, the $L^2$-conservation for $v$ directly yields \eqref{CLM}, whereas \eqref{CLE} is a standard computation, and valid for any $H^1$-solution $u$. Finally, it is 
straightforward to extend the solution to $v(t,\cdot)\in H^s(\R^d)$ for any $s>0$ provided the initial data satisfies $v_0\in H^s(\R^d)$.

\begin{remark}Note that \eqref{CLM} also implies a uniform-in-time bound on the $H^1$-norm of $u(t,\cdot)$ for any $\eps >0$. In turn, this means that both the kinetic and the nonlinear potential energy remain uniformly bounded for all $t\in\R$. 
\end{remark}


\bibliographystyle{amsplain}

\end{document}